\documentclass[oneside,english,reqno]{amsart}
\usepackage{algorithm}
\usepackage{algorithmic}
\usepackage{multirow}
\usepackage{subfigure}
\usepackage{enumerate}

\usepackage[T1]{fontenc}
\usepackage[latin9]{inputenc}
\setcounter{tocdepth}{1}
\usepackage{babel}
\usepackage{amsthm}
\usepackage{amssymb}
\usepackage{graphicx}
\usepackage[unicode=true,pdfusetitle,
bookmarks=true,bookmarksnumbered=false,bookmarksopen=false,
breaklinks=false,hidelinks,backref=false,colorlinks=false]
{hyperref}

\makeatletter
\numberwithin{equation}{section}
\numberwithin{figure}{section}

\makeatother

\newcommand{\hsubset}{\ensuremath{C}}
\newcommand{\solset}{\ensuremath{Z}}
\newcommand{\secondhsubset}{\ensuremath{H}}
\newcommand{\prox}{\ensuremath{\text{Prox}}}

\makeatletter
\renewcommand{\ALG@name}{Iterative Scheme}
\makeatother

\newtheorem{proposition}{Proposition}
\newtheorem{theorem}{Theorem}
\newtheorem{lemma}{Lemma}
\newtheorem{remark}{Remark}
\newtheorem{corollary}{Corollary}


	\keywords{
		proximal algorithm with memory \and primal-dual algorithm \and best approximation of the Kuhn-Tucker set \and inclusions with maximally monotone operators \and attraction property \and image reconstruction
	}
\begin{document}
	%
	%
	%
	
	\title{Proximal primal-dual best approximation  algorithm with memory}
	
	\subjclass[2010]{47H05, 47J25, 65J20, 65J22, 49M27, 68Q25, 68U10, 90C25, 90C30, 94A08.}

	\author{
		Ewa M. Bednarczuk$^1$ 
	}
	\thanks{$^1$ System Research Institute, Warsaw University of Technology, 		\href{mailto:e.bednarczuk@mini.pw.edu.pl}{e.bednarczuk@mini.pw.edu.pl}}
	\author{
		Anna Jezierska$^2$
	}
	\thanks{$^2$ System Research Institute, Gdansk University of Technology, \href{mailto:Anna.Jezierska@ibspan.waw.pl}{Anna.Jezierska@ibspan.waw.pl}
		\url{http://www.ibspan.waw.pl/\~jeziersk/}		
	}
	\author{
		Krzysztof E. Rutkowski$^3$ 
	}
	\thanks{$^3$ Warsaw University of Technology, 		 \href{mailto:k.rutkowski@mini.pw.edu.pl}{k.rutkowski@mini.pw.edu.pl}}
	\maketitle
	\tableofcontents{}
	
	\begin{abstract}
	We propose a new modified primal-dual proximal best approximation method for solving convex not necessarily differentiable optimization problems. The novelty of the method relies on introducing memory 
	by taking into account iterates computed in previous steps in the formulas defining current iterate.
	To this end we consider projections onto  
	intersections of halfspaces generated on the basis of the current as well as the previous iterates. To calculate these projections we are using recently obtained closed-form expressions for projectors onto polyhedral sets. 
	The resulting  algorithm with memory inherits strong convergence properties of the original best approximation proximal primal-dual algorithm.
	Additionally, we  compare our algorithm with the original (non-inertial) one  with the help of  the so called \textit{attraction property} defined below.
	Extensive numerical experimental results on image reconstruction problems
	illustrate the advantages of including memory into the original algorithm. 

	\end{abstract}

	\section{Introduction}
	Motivated by problems arising in the field of inverse problems, signal processing, computer vision and machine learning, there
	has been an increasing interest in primal-dual methods \cite{Bot2013,playing_with_duality,Li2016}.
	Over the last years, substantial progress has been made. Among others, the recent advances concern block algorithms \cite{Chouzenoux_2016_block_coordinate_fb,a_simplified_form_of_block}, asynchronous methods \cite{asynchronous_block,ARock}, generalizations of projection
	algorithms \cite{Combettes_2015_best_bergman,He2015} and introduction of memory effect. 
	
	While versions with memory of several proximal primal-dual algorithms already exist \cite{inertial_proximal_admm,a_general_inertial_proximal_point_algorithm_for_mixed_variational,an_inertial_forward_backward_monotone_inclusions,Ochs_2015_ipiasco,a_parallel_inertial_proximal_optimization_method}, in this paper we propose a new way of
	introducing memory effect in projection algorithms by studying algorithm \cite{Alotaibi_2015_best}. 
	We consider the following convex optimization problem
	\begin{equation}
		\label{prob_main}
		\min\limits_{p\in H}\ f(p)+g(Lp),
	\end{equation}
	where $H$ and $G$ are two real Hilbert spaces, $f:\ H \rightarrow \mathbb{R}\cup \{+\infty\}$, $g:\ G\rightarrow\mathbb{R}\cup \{+\infty\}$ are proper convex lower semi-continuous functions and $L:\ H\rightarrow G$ is a bounded linear operator. 
	Under suitable regularity conditions  problem \eqref{prob_main}
	is equivalent to   the problem of finding $p \in H$ such that
	\begin{equation}\label{inclusion:primal}
		0 \in \partial\, f (p)+L^{*}\partial\, g(Lp),
	\end{equation}
	where $\partial(\cdot) $ denotes the subdifferential set-valued 
	operator. Problem \eqref{inclusion:primal} is of the form
	\begin{equation}\tag{P}\label{inclusion:primal-max}
		0 \in A(p)+L^{*}B(Lp),
	\end{equation}
	where $A:\ H\rightrightarrows H$ and $B:\ G\rightrightarrows G$ are maximally monotone set-valued operators.

	Different approaches to solve
	\eqref{inclusion:primal-max} have been proposed e.g. in
	\cite{a_parallel_splitting_method,general_projective_splitting_methods,a_stochastic_inertial_forward_backward}. 
	In particular, primal-dual approaches to solve \eqref{prob_main} may lead to formulations which can be represented as in \eqref{inclusion:primal-max}, see e.g. 
	\cite{Alotaibi_2015_best,Alotaibi_2014_sucessive_fejer_approximation,inertial_radu,Bot_2015_convergence_rate_primal_dual,Douglas_Rachford_type,on_the_ergdoic_covergence_rates_Chambolle,Combettes2012} and the references therein. Recently, the primal-dual approach has been applied in \cite{a_new_splitting} to a more general form of (\ref{inclusion:primal-max}) involving the sum of 
	two maximally monotone operators and a monotone operator. 
	The case when $A$ is maximal monotone and $B$ is strongly monotone was considered in \cite{a_stochastic_inertial_forward_backward}. The overview of primal-dual approaches to solve \eqref{inclusion:primal-max} has been recently proposed in \cite{playing_with_duality}. 
	
	Some algorithms to solve \eqref{prob_main} 
	which rely on including $x_{n-1}$ into the definition of $x_{n+1}$ were proposed in \cite{weak_convergence_of_relaxed_and_inertial_hybrid,Alvarez2001,inertial_radu,on_the_ergdoic_covergence_rates_Chambolle,Johnstone2017,numerical_approach_to_a_stationary_solution,inertial_iteratice_process_for_fixed_points,Mainge_2014_inertial_km_alg,regularized_and_inertial_alg_for_common_fixed_points_nonlinear,a_hybrid_inertial_projection_proximal,an_approximate_inertial_proximal_method_enlargement,Ochs_2015_ipiasco,Ochs_2014_ipiano}. They are mostly based on discretizations of the second order differential system related to the problem (\ref{inclusion:primal}).
	This system, called \textit{heavy ball with friction}, is exploited in order to accelerate convergence. Indeed, the introduction of the  inertial term was shown to improve  the speed of convergence significantly \cite{Johnstone2017,numerical_approach_to_a_stationary_solution}.

	In \cite{a_parallel_inertial_proximal_optimization_method}  Pesquet and Pustelnik proposed a primal method to solve \eqref{prob_main} with inertial effect introduced through inertia parameters. 
	The method explores information from more than one previous steps and allows finding zeros of
	the sum of an arbitrary finite number of maximally monotone operators (see also \cite{general_projective_splitting_methods}).

	For monotone inclusion problems \eqref{inclusion:primal-max} inertial proximal algorithms and fixed-points iterations have been  proposed in \cite{weak_convergence_of_relaxed_and_inertial_hybrid,Alvarez2001,an_inertial_alternating_direction,inertial_radu,inertial_douglas_rachford,an_inertial_forward_backward_monotone_inclusions,an_approximate_inertial_proximal_method_enlargement,Moudafi_2003_proximalconvergence,a_stochastic_inertial_forward_backward}.

	In the present paper we propose a new projection algorithm with memory. 
	We introduce a memory effect into projection algorithms 
	by relying on successive projections onto   polyhedral sets
	constructed with the help of
	halfspaces 
	originating from current and previous iterates. To the best of our knowledge this way of introducing memory has not been  considered yet.

	By applying to problem \eqref{inclusion:primal-max} the generalized Fenchel-Rockafellar duality framework \cite[Corollary 2.12]{pennanen1999dualization} (see also Corollary 2.4 of \cite{pennanen_dualization}) we obtain the dual inclusion problem which amounts  to finding $v^*\in G$ such that
	\begin{equation}\tag{D}\label{inclusion:dual}
		0 \in -LA^{-1}(-Lv^*)+B^{-1}v^*.
	\end{equation}
	By \cite[Corollary 2.12]{pennanen1999dualization}, a point $p\in H$ solves (\ref{inclusion:primal-max}) if and only if $v^*\in G$ solves (\ref{inclusion:dual}) and $(p,v^*)\in Z$, where
	\begin{equation}\label{set:Z}
		Z:=\{ (p,v^*)\in H\times G \ |\ -L^* v^* \in Ap \quad \text{and}\quad Lp \in B^{-1}v^*  \}.
	\end{equation}
	In the case when $L=Id$ and $H=G$, the set $Z$ reduces to the \textit{extended solution set} $S_{e}(A,B)$ as defined in \cite{A_family_of_projective_splitting_methods}.
	The set $Z$ is a closed convex subset of $H\times G$ (see e.g. \cite[Proposition 23.39]{Bauschke_2011_convex_analysis}).
	
	The Fenchel-Rockafellar dual problem of \eqref{prob_main}
	takes the form (see \cite{pennanen_dualization})
	\begin{equation}\label{optimization_funD}
		\min_{v^*\in G}\ f^*(-L^*v^*)+g^*(v^*),
	\end{equation}
	where $f^*$ denotes the conjugate function \cite{Rockafellar_1970_convex_analysis}. In this case set $Z$ is of the form
	\begin{equation}
		Z = \{(p,v^*)\in H\times G \ |\ -Lv^* \in \partial f(x)\ \text{and}\ v^*\in \partial g(Lx)  \}.
	\end{equation}

	\subsection{Projection methods}

	The idea 
	of finding a point in $Z$ is based on the fact that
	\begin{equation*}
		Z \subset \{ (p,v^*)\in H \times G \ |\ \varphi(p,v^*) \leq 0  \}:=H_\varphi,
	\end{equation*}
	where $\varphi(p,v^*):=\langle p - a \ |\ a^*+L^*v^* \rangle + \langle b^*-v^* \ |\ Lp - b \rangle$, $(a,a^*)\in \text{graph}A$, $(b,b^*)\in \text{graph}B$.
	This  suggests the following 
	iterative scheme for finding a point in $Z$  based on projections onto $H_\varphi$:
	for any $(p_0,v_0^*)\in H\times G$ and relaxation parameters $\lambda_n\in (0,2)$, $n\in \mathbb{N}$ let
	\begin{equation}\label{projection-method}
		(p_{n+1},v_{n+1}^*):=(p_n,v_n)+\lambda_n(P_{H_n}(p_n,v_n^*)-(p_n,v_n^*))  ,
	\end{equation}
	where $H_n:=\{ (p_n,v_n^*)\in H \times G \ |\ \varphi(p_n,v_n^*) \leq 0  \}$ with $\varphi_n$  defined for suitably chosen $(a_n,a_n^*)\in \text{graph}A$, $(b_n,b_n^*)\in \text{graph}B$ (\cite[Proposition 2.3]{Alotaibi_2014_sucessive_fejer_approximation}, see also \cite[Lemma 3]{A_family_of_projective_splitting_methods}) and $P_D(x)$ denoting the projection of $x$ onto the set $D$.
	For $L=Id$ this iteration scheme has been proposed by Eckstein and Svaiter \cite{A_family_of_projective_splitting_methods}  and its fundamental convergence properties has been investigated in  \cite[Proposition 1,  Proposition 2]{A_family_of_projective_splitting_methods}.

	Further convergence properties of \eqref{projection-method} have been investigated in \cite{Alotaibi_2014_sucessive_fejer_approximation} and  \cite[Theorem 2]{A_Note_on_the_Paper_by_Eckstein_and_Svaiter}. The sequence generated by \eqref{projection-method} is Fej{\'e}r monotone with respect to set $Z$ and, in general, only its weak convergence is guaranteed.
	
	Modifications of \eqref{projection-method} to force strong convergence have been proposed in \cite{Alotaibi_2015_best,strong_convergence_of_a_splitting_two,forcing_strong_2000,a_new_splitting,modified_extragradient_method}.  Recently, asynchronous block-iterative methods are proposed in \cite{asynchronous_block,a_simplified_form_of_block}.
	\subsection{The aim}
	In the present paper we propose a primal-dual projection algorithm with memory to solve (\ref{inclusion:primal-max})
	which relies on finding a point in the set $Z$ defined by \eqref{set:Z}. 
	The origin of our idea goes back to the algorithm of Haugazeau \cite[Corollary 29.8]{Bauschke_2011_convex_analysis}
	, who proposed an algorithm for finding the projection of $x_0\in H$ onto the intersection of a finite number of closed convex sets by using projections of $x_0$ onto intersections of two halfspaces. 
	These halfspaces are defined on the basis of the current iterate $x_n$ (see also \cite{strong_convergence_of_a_splitting_two,forcing_strong_2000,strong_convergence_of_a_splitting_projection}).  
	
	In our approach we take into account projections of $x_0$ onto intersections of three halfspaces which are defined on the basis of not only $x_n$ but also $x_{n-1}$. 
	
	The contribution of the paper is as follows.
	\begin{itemize}	
		\item We apply formulas for 
		projections onto intersections of three half-spaces in Hilbert spaces derived in \cite{explicit_formulas_KR}. We show that in the considered cases (Proposition \ref{prop:setdefinitions}) the complete enumeration is not required (Proposition \ref{projection_reduced}).
		\item We propose a number of iterative schemes with memory for solving  primal-dual 
		problems defined by \eqref{inclusion:primal-max} and \eqref{inclusion:dual}.
		\item We apply our iterative schemes to propose a proximal algorithm with memory to solve
		minimization problem defined by a  finite sum of convex functions.
		\item We provide convergence  comparison of the proposed algorithm with its non-memory version  in terms of \textit{attraction property} (Proposition \ref{prop:inertial_property}).
		\item We perform an experimental study aiming at comparing the best approximation algorithm proposed in \cite{Alotaibi_2015_best} and our algorithm. 
	\end{itemize}
	
	The organization of the paper is as follows.
	In section \ref{sec:proposed_approach} we  propose the underlying iterative schemes with memory  and we formulate basic convergence results. In section \ref{sec:choice} we provide several versions 
	of the iterative scheme with memory. One of the main ingredients is a  closed-form formula for projectors onto polyhedral sets introduced in \cite{explicit_formulas_KR}. In section \ref{sec:convergence_analysis} we perform the convergence  comparison of the proposed iterative schemes. In section \ref{sec:the_algorithm} we cast our general idea so as to be able to solve optimization problem of minimization of the sum of two convex, not necessarily differentiable functions. In section \ref{sec:experiments} we present the results of the numerical experiment.

	\section{The proposed approach}
	\label{sec:proposed_approach}
	
	In section \ref{subsec:succesive_fejer} we recall generic Fej{\'e}r Approximation Scheme for finding an element from the set $Z$ defined by \eqref{set:Z} and its basic properties. In section \ref{subsec:best_approx} we propose refinements of Fej{\'e}r Approximation Scheme which are based on the idea proposed 
	by Haugazeau \cite{Haugazeau}, see also 
	\cite[Corollary]{Bauschke_2011_convex_analysis}. 
	The crucial issue of the proposed refinements is to improve  convergence properties. 
	
	In the sequel, for any $x \in H\times G$ we write $x = \left(p,v^*\right)$, where $p\in H$ and $v^*\in G$.
	
	\subsection{Successive Fej{\'e}r Approximations iterative scheme}\label{subsec:succesive_fejer}
	Let $H$, $G$ be real Hilbert spaces and let $Z$ be defined by (\ref{set:Z}). Let $\{H_n\}_{n\in \mathbb{N}}\subset H\times G$, be a sequence of convex closed sets such that $Z\subset H_n,\ n\in \mathbb{N}$. The projections of any $x\in H$ onto $H_n$ are uniquely defined.
	
	\begin{algorithm}[H]
		\caption{Generic 
			Fej{\'e}r Approximation Iterative Scheme 	\label{alg:generic_fejer}}
		\begin{algorithmic}
			\STATE{Choose an initial point $x_0\in H\times G$} 
			\STATE{Choose a sequence of parameters $\{\lambda_n\}_{n\ge 0}\in (0,2)$}
			\FOR{$n=0,1\dots$}
			\STATE{$x_{n+1}=x_n+\lambda_n(P_{H_n}(x_{n})-x_n)$}
			\ENDFOR
			\RETURN
		\end{algorithmic}
	\end{algorithm}
	\begin{theorem}\label{theorem:fejer_scheme}  (\cite[Proposition 3.1]{Alotaibi_2014_sucessive_fejer_approximation}, see also \cite{Combettes_2009_fejer_monotonicity})
		For any sequence generated by Iterative Scheme \ref{alg:generic_fejer} the following hold:
		\begin{enumerate}
			\item $\{x_n\}_{n\in \mathbb{N}}\subset H\times G$ is Fej{\'e}r monotone with respect to the set $Z$, i.e 
			\begin{displaymath}
				\forall_{n\in\mathbb{N}} \ \forall_{z\in Z}\ \|x_{n+1}-z\|\leq \|x_n-z\|,
			\end{displaymath}
			\item $\sum\limits_{n=0}^{+\infty} \lambda_n (2-\lambda_n) \|P_{H_n}(x_{n})-x_n\|^2< +\infty$,
			\item if
			\begin{displaymath}
				\forall x\in H\times G \
				\forall \{k_n\}_{n\in \mathbb{N}}\subset\mathbb{N} \quad x_{k_n} \rightharpoonup x \implies x \in Z,
			\end{displaymath}	 
			then $\{x_{n}\}_{n\in \mathbb{N}}$ converges weakly to a point in $Z$.
		\end{enumerate}
		
	\end{theorem}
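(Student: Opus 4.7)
My plan is to establish the three conclusions in the order they are stated, since each one builds on the estimate used for the previous one. The central computation is a single quasi-Fej\'er inequality that simultaneously yields items (1) and (2); item (3) is then a standard Opial-type argument.

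First I would fix $z \in Z$. Since $Z \subset H_n$, the projection inequality gives the standard estimate
\begin{equation*}
\|P_{H_n}(x_n)-z\|^2 \;\le\; \|x_n-z\|^2-\|x_n-P_{H_n}(x_n)\|^2.
\end{equation*}
Writing $y_n := P_{H_n}(x_n)$, the update reads $x_{n+1} = (1-\lambda_n)x_n + \lambda_n y_n$, so applying the affine-combination identity
\begin{equation*}
\|(1-\alpha)a+\alpha b\|^2 = (1-\alpha)\|a\|^2+\alpha\|b\|^2 - \alpha(1-\alpha)\|a-b\|^2,
\end{equation*}
which is valid for every real $\alpha$ (in particular for $\lambda_n\in(0,2)$), to $a = x_n-z$ and $b = y_n-z$ and combining with the projection inequality yields
\begin{equation*}
\|x_{n+1}-z\|^2 \;\le\; \|x_n-z\|^2-\lambda_n(2-\lambda_n)\|x_n-y_n\|^2.
\end{equation*}
Since $\lambda_n(2-\lambda_n)>0$, this is exactly Fej\'er monotonicity with respect to $Z$, proving (1).

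For (2), I would telescope the above inequality: summing from $n=0$ to $n=N$ gives
\begin{equation*}
\sum_{n=0}^{N}\lambda_n(2-\lambda_n)\|x_n-y_n\|^2 \;\le\; \|x_0-z\|^2-\|x_{N+1}-z\|^2 \;\le\; \|x_0-z\|^2,
\end{equation*}
and letting $N\to\infty$ yields the claimed summability.

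For (3), the argument is standard: by (1) the sequence $\{\|x_n-z\|\}$ is non-increasing, hence bounded, so $\{x_n\}$ itself is bounded and admits weak cluster points. By the hypothesis, every weak cluster point lies in $Z$. The classical Opial lemma for Fej\'er monotone sequences (see e.g.\ \cite[Theorem 5.5]{Bauschke_2011_convex_analysis} or \cite{Combettes_2009_fejer_monotonicity}) then ensures that $\{x_n\}$ has a unique weak cluster point in $Z$, and consequently $x_n \rightharpoonup x^\star$ for some $x^\star\in Z$. The only mildly delicate point is verifying that $\|x_n-z\|$ converges for every $z\in Z$ (immediate from (1)) and invoking Opial's lemma correctly; the rest is bookkeeping. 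No step presents a real obstacle, as the whole argument is a template Fej\'er analysis, the only subtlety being the use of the real-valued affine-combination identity to accommodate relaxation parameters beyond $1$.
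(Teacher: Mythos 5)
Your argument is correct: the single quasi-Fej\'er inequality $\|x_{n+1}-z\|^2\le\|x_n-z\|^2-\lambda_n(2-\lambda_n)\|x_n-P_{H_n}(x_n)\|^2$, obtained from the projection inequality together with the exact affine-combination identity (valid for all $\lambda_n\in(0,2)$, not just $\lambda_n\le 1$), delivers items (1) and (2), and item (3) follows from the standard fact that a Fej\'er monotone sequence whose weak cluster points all lie in $Z$ converges weakly to a point of $Z$. Note that the paper itself gives no proof of this statement --- it is quoted directly from Proposition 3.1 of \cite{Alotaibi_2014_sucessive_fejer_approximation} --- and your derivation is precisely the template argument used in that reference and in the Fej\'er-monotonicity framework of \cite{Combettes_2009_fejer_monotonicity}.
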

	
	In \cite{Alotaibi_2014_sucessive_fejer_approximation} the sets $H_n$ appearing in Iterative Scheme \ref{alg:generic_fejer} are defined as closed halfspaces $H_{a_n,b_n^*}$, 
	\begin{align}\label{settings2}
		\begin{aligned}
			H_{a_n,b_n^*} &:= \left\{x \in H\times G \mid \left\langle  x \mid s_{a_n,b_n^*}^*\right\rangle \leq \eta_{a_n,b_n^*}  \right\},\\
			s_{a_n,b_n^*}^*  &:= (a_n^* + L^*b_n^*, b_n - La_n) ,  \\
			\eta_{a_n,b_n^*} &:=\left\langle a_n \mid a_n^* \right\rangle + \left\langle b_n \mid b_n^* \right\rangle,
		\end{aligned}
	\end{align}
	with
	\begin{align*}
		\begin{aligned}
			& a_n := J_{\gamma_n A} (p_n - \gamma_n L^* v_n^*),\quad b_n := J_{\mu_n B} (L p_n + \mu_n v_n^*) ,  \\
			& a_n^* := \gamma_n^{-1}(p_n-a_n) - L^* v_n^* ,\quad b_n^* := \mu_n^{-1} (Lp_n-b_n) +  v_n^*,
		\end{aligned}
	\end{align*}
	where for any maximally monotone operator $D$ and constant $\xi>0$, $J_{\xi D}(x)=(Id+\xi D)^{-1}(x)$. Parameters $\mu_n,\gamma_n>0$ are suitable defined. 
	It easy to see $H_{\varphi_n}=H_{a_n,b_n^*}$, where $\varphi_n=\varphi(a_n,b_n^*)$. 
	
	For $H_n=H_{a_n,b_n^*}$ Theorem \ref{theorem:fejer_scheme} can be strengthened in following way.
	\begin{theorem}\cite[Proposition 3.5]{Alotaibi_2014_sucessive_fejer_approximation} For any sequence generated by Iterative Scheme \ref{alg:generic_fejer} with $H_n$ defined by $(\ref{settings2})$ the following hold:
		\begin{enumerate}
			\item  $\{x_{n}\}_{n\in \mathbb{N}}=\{(p_n,v_n^*)\}_{n\in \mathbb{N}}$ is Fej{\'e}r monotone with respect to the set $Z$,
			\item $\sum\limits_{n=0}^{+\infty} \|a_n^*+L^*b_n^*\|^2 < +\infty $ and $\sum\limits_{n=0}^{+\infty} \|La_n-b_n\|^2 < +\infty $,
			\item $\sum\limits_{n=0}^{+\infty} \|p_{n+1}-p_n\|^2 < +\infty $ and $\sum\limits_{n=0}^{+\infty} \|v_{n+1}^*-v_n^*\|^2 < +\infty $,
			\item  $\sum\limits_{n=0}^{+\infty} \|p_n-a_n\|^2 < +\infty $ and $\sum\limits_{n=0}^{+\infty} \|v_n^*-b_n^*\|^2 < +\infty $,
			\item $\{x_{n}\}_{n\in \mathbb{N}}$ converges weakly to a point in $Z$.
		\end{enumerate}
	\end{theorem}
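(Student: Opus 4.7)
The plan is to deduce all five assertions from Theorem \ref{theorem:fejer_scheme} specialized to $H_n = H_{a_n,b_n^*}$, exploiting the fundamental resolvent identities
\[
p_n - a_n = \gamma_n(a_n^* + L^* v_n^*), \qquad L p_n - b_n = \mu_n(b_n^* - v_n^*),
\]
which are built into the definitions of $a_n^*$ and $b_n^*$ in \eqref{settings2}.

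For (1), one first verifies $Z \subset H_{a_n,b_n^*}$: given $(p,v^*)\in Z$, the pairs $(p,-L^*v^*),(a_n,a_n^*)\in\text{graph}(A)$ and $(Lp,v^*),(b_n,b_n^*)\in\text{graph}(B)$ yield two monotonicity inequalities whose sum, after regrouping, is exactly $\langle(p,v^*)\mid s^*_{a_n,b_n^*}\rangle\le\eta_{a_n,b_n^*}$. Then (1) is item (1) of Theorem \ref{theorem:fejer_scheme}.

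For (2)--(4) I would use the explicit halfspace-projection formula
\[
\|P_{H_n}(x_n) - x_n\|^2 \;=\; \frac{\bigl(\max\{0,\langle x_n\mid s_n^*\rangle-\eta_n\}\bigr)^2}{\|s_n^*\|^2}.
\]
A direct expansion using the resolvent identities yields the key nonnegative identity
\[
\langle x_n\mid s_n^*\rangle - \eta_n \;=\; \gamma_n\|a_n^*+L^*v_n^*\|^2 + \mu_n\|b_n^*-v_n^*\|^2,
\]
while two applications of the triangle inequality, together with the standard two-sided bounds $0<\underline\gamma\le\gamma_n\le\bar\gamma$ and $0<\underline\mu\le\mu_n\le\bar\mu$, upper-bound $\|s_n^*\|^2$ by a constant multiple of $\|a_n^*+L^*v_n^*\|^2+\|b_n^*-v_n^*\|^2$. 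Combining produces a constant $\kappa>0$ with
\[
\|P_{H_n}(x_n)-x_n\|^2 \;\ge\; \kappa\bigl(\|a_n^*+L^*v_n^*\|^2+\|b_n^*-v_n^*\|^2\bigr).
\]
Item (2) of Theorem \ref{theorem:fejer_scheme} together with the standard relaxation bound $\lambda_n\in[\varepsilon,2-\varepsilon]$ then gives summability of $\|a_n^*+L^*v_n^*\|^2$ and $\|b_n^*-v_n^*\|^2$. Item (4) is immediate from $\|p_n-a_n\|=\gamma_n\|a_n^*+L^*v_n^*\|$ and $\|v_n^*-b_n^*\|=\|b_n^*-v_n^*\|$; item (2) follows via $\|a_n^*+L^*b_n^*\|\le\|a_n^*+L^*v_n^*\|+\|L\|\|b_n^*-v_n^*\|$ and $\|La_n-b_n\|\le\|L\|\|a_n-p_n\|+\|L p_n-b_n\|$. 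Item (3) drops out of $\|x_{n+1}-x_n\|^2=\lambda_n^2\|P_{H_n}(x_n)-x_n\|^2$ and the same bound on $\lambda_n$.

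The main obstacle is (5). By item (3) of Theorem \ref{theorem:fejer_scheme} it suffices to show that every weak sequential cluster point of $\{x_n\}$ lies in $Z$. The cleanest route is to introduce the product-plus-skew operator
\[
T(p,v^*) \;:=\; \bigl(A p + L^* v^*,\; -L p + B^{-1} v^*\bigr),
\]
which is maximally monotone (product of two maximal monotones perturbed by an everywhere-defined continuous skew linear operator, so Rockafellar's sum theorem applies) and whose zero set is exactly $Z$. A direct check gives
\[
\bigl(a_n^*+L^* b_n^*,\; b_n - L a_n\bigr)\in T(a_n,b_n^*),
\]
and item (2) shows the right-hand side converges strongly to $0$. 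Meanwhile, item (4) together with weak continuity of $L$ ensures that along any subsequence $(p_{k_n},v_{k_n}^*)\rightharpoonup(p,v^*)$ one also has $(a_{k_n},b_{k_n}^*)\rightharpoonup(p,v^*)$. The weak-strong sequential closure of $\text{graph}(T)$ for the maximally monotone $T$ then forces $0\in T(p,v^*)$, i.e.\ $(p,v^*)\in Z$. The technically delicate piece is verifying maximal monotonicity of $T$ and correctly invoking weak-strong closure, together with bookkeeping the asymptotic equivalence between $(a_n,b_n^*)$ and $(p_n,v_n^*)$ in the weak topology.
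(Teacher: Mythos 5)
Your proposal is correct, and it follows essentially the same route as the source the paper is citing: the paper gives no proof of this theorem (it is quoted verbatim from Proposition 3.5 of Alotaibi et al.), and your argument --- the identity $\langle x_n\mid s_n^*\rangle-\eta_n=\gamma_n\|a_n^*+L^*v_n^*\|^2+\mu_n\|b_n^*-v_n^*\|^2$, the lower bound on $\|P_{H_n}(x_n)-x_n\|^2$, and the weak--strong closure of the graph of the maximally monotone operator $T(p,v^*)=(Ap+L^*v^*,\,-Lp+B^{-1}v^*)$ --- is exactly the standard one used there. The only implicit assumptions you add (two-sided bounds $\gamma_n,\mu_n\in[\varepsilon,1/\varepsilon]$ and $\lambda_n\in[\varepsilon,2-\varepsilon]$) are precisely the ``suitably defined'' parameter conditions of the cited reference, so nothing is missing.
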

	
	\subsection{Best approximation iterative schemes} \label{subsec:best_approx}
	Here we study iterative best approximation schemes in the form of Iterative Scheme \ref{alg:generic_best}. For any $x,y\in H\times G$ we define
	\begin{displaymath}
		H(x,y):=\{h\in H\times G\ |\ \langle h-y \ |\ x-y\rangle\le 0   \}.
	\end{displaymath}
	
	As previously, let $\{H_n\}_{n\in \mathbb{N}}\subset H\times G$ be a sequence of closed convex sets, $Z\subset H_n$ for $n\in \mathbb{N}$.  
	
	\begin{algorithm}[H]
		\caption{Generic primal-dual best approximation iterative scheme \label{alg:generic_best}}
		\begin{algorithmic}
			\STATE{Choose an initial point $x_0=(p_0,v_0^*)\in H\times G$} 
			\STATE{Choose a sequence of parameters $\{\lambda_n\}_{n\ge 0}\in (0,1]$}
			\FOR{$n=0,1\dots$}
			\STATE{\bf{Fej{\'e}rian step}}
			\STATE{$x_{n+1/2}=x_n+\lambda_n(P_{\secondhsubset_n}(x_{n})-x_n)$}
			\STATE{Let $\hsubset_n$ be a closed convex set such that $Z\subset C_n\subset H(x_n,x_{n+1/2})$.}
			\STATE{\bf{Haugazeau step}}
			\STATE{$x_{n+1}=P_{H(x_0, x_n) \cap \hsubset_n}(x_0)$}
			\ENDFOR
			\RETURN
		\end{algorithmic}
	\end{algorithm}
	The choice of $C_n = H(x_n,x_{n+1/2})$ has been already investigated in \cite{Alotaibi_2015_best}. There it has been shown that this choice allows to achieve strong convergence of the constructed sequence $\{x_{n}\}_{n\in \mathbb{N}}$ under relatively mild conditions.

	Our aim is to propose and investigate other choices of $\hsubset_n$
	defined with the help of not only $x_{n},\ x_{n+1/2}$ but also $x_{n-1}$ and/or $x_{n-1+1/2}$. For such choices of $C_n$ with memory the Iterative scheme \ref{alg:generic_best} becomes an iterative scheme with memory, i.e. in the construction of the next iterate $x_{n+1}$    not only current iterate $x_n$ but also $x_{n-1}$ is taken into account. 
	In the sequel we refer to the Iterative Scheme 
	\ref{alg:generic_best} with $C_{n}= H(x_n,x_{n+1/2})$
	as a scheme without memory and we compare it  with
	Iterative Scheme \ref{alg:generic_best}, where $C_{n}$ are with memory (see Proposition \ref{prop:setdefinitions} below).

	The Fej{\'e}rian step in Iterative Scheme \ref{alg:generic_best} coincides with what has been defined in Iterative Scheme \ref{alg:generic_fejer} and was previously discussed in \cite{Alotaibi_2014_sucessive_fejer_approximation,Combettes_2009_fejer_monotonicity}.
	
	Convergence properties of sequences  $\{x_n\}_{n\in\mathbb{N}}$ generated by
	Iterative Scheme \ref{alg:generic_best} are summarized in  Proposition \ref{prop:Haugazeau}   
	based on Proposition 2.1. of \cite{Alotaibi_2015_best} which, in turn, is based on Proposition 3.1. of \cite{strong_convergence_fo_block}. 
	
	
	\begin{proposition}\label{prop:Haugazeau}
		Let $\solset$ be a nonempty closed convex subset of $H\times G$ and let $x_0=(p_0,v_0^*) \in H\times G$. 
		Let $\{\hsubset_n\}_{n\in \mathbb{N}}$ be any sequence satisfying $Z\subset \hsubset_n\subset H(x_{n},x_{n+1/2})$, $n\in \mathbb{N}$. For the sequence $\{x_n\}_{n \in \mathbb{N}}$ generated by 
		Iterative Scheme \ref{alg:generic_best} the following hold:
		\begin{enumerate}
			\item \label{assertion_H1} $\solset\subset H(x_0,x_n) \cap \hsubset_n$ for $n\in \mathbb{N}$,
			\item \label{assertion_H2} $ \|x_{n+1}-x_0\|\geq \|x_n-x_0\|$ for $n\in \mathbb{N}$,
			\item \label{assertion_H3} $\sum\limits_{n=0}^{+\infty}\|x_{n+1}-x_n\|^2<+\infty$,
			\item \label{assertion_H4} $\sum\limits_{n=0}^{+\infty}\|x_{n+1/2}-x_n\|^2<+\infty$.
			\item \label{assertion_H5} If
			\begin{displaymath}
				\forall x\in H\times G\ \forall \{k_n\}_{n\in \mathbb{N}}\subset \mathbb{N} \quad x_{k_n}\rightharpoonup x \implies x \in \solset,
			\end{displaymath}
			then $x_n\rightarrow P_\solset(x_0)$. 
		\end{enumerate}	
	\end{proposition}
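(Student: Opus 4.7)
The plan is to verify the five assertions in the order stated, each feeding into the next, following the Haugazeau template from \cite[Proposition 2.1]{Alotaibi_2015_best}. Two tools are used throughout: the variational characterization of the projector (the \emph{obtuse angle condition}) $y = P_K(x_0)$ if and only if $y \in K$ and $\langle h - y \mid x_0 - y\rangle \leq 0$ for every $h \in K$, and the three-point expansion $\|u-w\|^2 = \|u-v\|^2 + 2\langle u-v \mid v-w\rangle + \|v-w\|^2$, applied at the configurations $(x_{n+1}, x_n, x_0)$ and $(x_{n+1}, x_{n+1/2}, x_n)$.

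For assertion (1) I would induct on $n$. At $n=0$, $Z \subset C_0$ holds by the standing assumption on $C_n$, and $H(x_0, x_0) = H\times G$ trivially contains $Z$. For the inductive step, $x_n = P_{H(x_0,x_{n-1}) \cap C_{n-1}}(x_0)$ and $Z \subset H(x_0,x_{n-1}) \cap C_{n-1}$ by the induction hypothesis, so the obtuse angle condition gives $\langle z - x_n \mid x_0 - x_n\rangle \leq 0$ for every $z \in Z$; that is, $Z \subset H(x_0, x_n)$, and $Z \subset C_n$ is part of the standing assumption. Assertion (2) is then immediate from the three-point expansion at $(x_{n+1}, x_n, x_0)$ together with $x_{n+1} \in H(x_0, x_n)$, which renders the cross term $2\langle x_{n+1}-x_n \mid x_n - x_0\rangle$ nonnegative.

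For assertion (3), assertion (1) tells us that $x_{n+1}$ is the projection of $x_0$ onto a set containing $Z$, so $\|x_{n+1}-x_0\| \leq \|z - x_0\|$ for any fixed $z \in Z$; combined with assertion (2), the sequence $\|x_n - x_0\|$ is nondecreasing and bounded, hence convergent. The same three-point expansion now yields the sharp estimate $\|x_{n+1}-x_n\|^2 \leq \|x_{n+1}-x_0\|^2 - \|x_n - x_0\|^2$, which telescopes to the summability claim. For assertion (4), the inclusion $C_n \subset H(x_n, x_{n+1/2})$ and $x_{n+1} \in C_n$ force $\langle x_{n+1} - x_{n+1/2} \mid x_n - x_{n+1/2}\rangle \leq 0$; the three-point expansion at $(x_{n+1}, x_{n+1/2}, x_n)$ then gives $\|x_{n+1/2}-x_n\|^2 \leq \|x_{n+1}-x_n\|^2$, and summability is inherited from assertion (3).

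Assertion (5) is the only place where the demiclosedness-type hypothesis enters, and it is the main conceptual step. Boundedness of $\{x_n\}$ from assertions (2)--(3) guarantees the existence of weak cluster points, all of which lie in $Z$ by hypothesis. Let $\bar z := P_Z(x_0)$; assertion (1) places $\bar z$ in $H(x_0, x_n)$, and the halfspace inequality applied to $\bar z$ yields $\|x_n - x_0\| \leq \|\bar z - x_0\|$ for every $n$. Weak lower semicontinuity of the norm forces any weak cluster point $x^*$ to satisfy $\|x^* - x_0\| \leq \|\bar z - x_0\|$; since $x^* \in Z$ and $\bar z$ is the unique minimizer of $\|\cdot - x_0\|$ on $Z$, one concludes $x^* = \bar z$. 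Hence $x_n \rightharpoonup \bar z$; passing to the limit in the norm bound together with lower semicontinuity gives $\|x_n - x_0\| \to \|\bar z - x_0\|$, and in a Hilbert space weak convergence combined with convergence of norms yields strong convergence. The main obstacles are the bookkeeping in the induction for assertion (1) and the Kadec--Klee step that closes assertion (5); the remainder reduces to repeated use of the Pythagorean identity.
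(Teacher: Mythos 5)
Your proof is correct and follows essentially the same route as the paper: the obtuse-angle characterization of the projector for the induction in assertion (1), the inclusion $x_{n+1}\in H(x_0,x_n)$ for (2), and the Pythagorean expansion at $(x_{n+1},x_{n+1/2},x_n)$ for (4). The paper omits the proofs of (3) and (5) by citing \cite{Alotaibi_2015_best}, and the arguments you supply for them (telescoping $\|x_{n+1}-x_n\|^2\leq\|x_{n+1}-x_0\|^2-\|x_n-x_0\|^2$, and the weak-cluster-point plus Kadec--Klee argument) are exactly the standard ones used there.
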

	
	\begin{proof} The proof follows the lines of the proof of Proposition 2.1 of \cite{Alotaibi_2015_best}. The proof of assertion \ref{assertion_H3} and \ref{assertion_H5} coincide with the respective parts of the proof of Proposition 2.1 of \cite{Alotaibi_2015_best} and is omitted here. We provide the proofs of assertions \ref{assertion_H1}, \ref{assertion_H2}, \ref{assertion_H4} for completeness.
		\begin{enumerate}
			\item First we show that $\solset\subset H(x_0,x_n)$. For $n=0$, $x_{1}=P_{H(x_0,x_0)\cap \hsubset_0}(x_0)$, so $\solset\subset H(x_0,x_1)$. Furthermore, $ H(x_0,x_n)\cap  \hsubset_n \subset H(x_0,P_{H(x_0,x_n) \cap \hsubset_n}(x_0))$ and
			\begin{align*}
				\solset\subset H(x_0,x_n)&\implies \solset\subset H(x_0,x_n) \cap \hsubset_n\\
				&\implies \solset\subset H(x_0,P_{H(x_0,x_n)\cap \hsubset_n}(x_0))\\
				& \Leftrightarrow \solset\subset H(x_0,x_{n+1})
			\end{align*}
			\item By construction, for $n\in \mathbb{N}$, $x_{n+1}=P_{H(x_0,x_{n})\cap \hsubset_n}(x_0)$ and $x_{n+1}\subset H(x_0,x_n) \cap \hsubset_n$, so $x_{n+1}\in H(x_0,x_n)$. This implies $\|x_n-x_0\|\leq \|x_{n+1}-x_0\|$.
			\item[4.] $\hsubset_n \subset H(x_n,P_{\hsubset_n}(x_n))\subset H(x_n, x_n+\lambda_n (P_{\hsubset_n}(x_n)-x_n) )  = H(x_n,x_{n+1/2})$.
			Since $x_{n+1}\in \hsubset_n \subset \secondhsubset_n \subset H(x_n,x_{n+1/2})$,
			we deduce that
			\begin{align*}
				&\|x_{n+1/2}-x_n\|^2\leq x_{n+1}-x_{n+1/2}\|^2+\|x_{n+1/2}-x_n\|^2\\
				&\leq  x_{n+1}-x_{n+1/2}\|^2+2\langle x_{n+1}-x_{n+1/2} \ |\ x_{n+1/2}-x_{n}\rangle+\|x_{n+1/2}-x_n\|^2\\
				& \leq \|x_{n+1}-x_n\|^2.
			\end{align*}
			By item \ref{assertion_H3}, $\sum\limits_{n=0}^{+\infty} \|x_{n+1}-x_n\|^2< +\infty$, hence $\sum\limits_{n=0}^{+\infty} \|x_{n+1/2}-x_n\|^2 < +\infty$.
		\end{enumerate}
	\end{proof}

	\begin{remark}
		Note that for $\hsubset_n=H(x_n,x_{n+1/2})$ and  $H_n=H_{a_n,b_n^{*}}$ we obtain the primal-dual best approximation algorithm  introduced by Alotaibi et al. in \cite{Alotaibi_2015_best}, involving projections onto the intersections of two halfspaces 
		$H(x_{0},x_{n})\cap H(x_n,x_{n+1/2})$ studied in \cite[Section 28.3]{Bauschke_2011_convex_analysis}.
		Condition $Z\subset \hsubset_n\subset H(x_{n},x_{n+1/2})$, $n\in \mathbb{N}$ allows one to consider
		choices of $C_{n}$ other than  $\hsubset_n=H(x_n,x_{n+1/2})$.
	\end{remark}

	When  $H_n:=H_{a_n,b_n^{*}}$ $n\in \mathbb{N}$, where   $H_{a_n,b_n^{*}}$ are defined by (\ref{settings2}), Proposition \ref{prop:Haugazeau} takes the following form. 
	\begin{proposition}\label{theorem:best_projection}
		Let $\solset$ be a nonempty closed convex subset of $H\times G$ and let $x_0=(p_0,v_0^*) \in H\times G$. 
		Let $\{\hsubset_n\}_{n\in \mathbb{N}}$ be a
		sequence of closed convex sets 
		satisfying the condition
		$Z\subset \hsubset_n\subset H(x_{n},x_{n+1/2})$  
		and
		$H_n:=H_{a_n,b_n^{*}}$, $\in \mathbb{N}$.
		For any sequence $\{x_n\}_{n \in \mathbb{N}}$ generated by 
		Iterative Scheme \ref{alg:generic_best} the following hold:
		\begin{enumerate}
			\item $ \|x_{n+1}-x_0\|\geq \|x_n-x_0\|$ for all $n\in \mathbb{N}$,
			\item $\sum\limits_{n=0}^{+\infty} \|p_{n+1}-p_n\|^2 <+\infty$ and $\sum\limits_{n=0}^{+\infty} \|v_{n+1}^*-v_n^*\|^2 <+\infty$,
			\item $\sum\limits_{n=0}^{+\infty} \|p_n-a_n\|^2 < +\infty$ and $\sum\limits_{n=0}^{+\infty} \|Lp_n-b_n\|^2 < +\infty$,
			\item $p_n\rightarrow \bar{x}$, $v_n^*\rightarrow \bar{v}^*$ and $(\bar{p},\bar{v}^*)\in \solset$.
		\end{enumerate}
	\end{proposition}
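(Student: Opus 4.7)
The plan is to reduce each assertion to its counterpart in Proposition~\ref{prop:Haugazeau}, exploiting the specific algebraic structure of the halfspaces $H_{a_n,b_n^*}$ from (\ref{settings2}).

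\textbf{Assertions 1 and 2.} Assertion 1 is literally assertion~\ref{assertion_H2} of Proposition~\ref{prop:Haugazeau}. For assertion 2, I would simply observe that in the product Hilbert space $H\times G$ with $x_n=(p_n,v_n^*)$ one has $\|x_{n+1}-x_n\|^2 = \|p_{n+1}-p_n\|^2+\|v_{n+1}^*-v_n^*\|^2$, so that assertion~\ref{assertion_H3} of Proposition~\ref{prop:Haugazeau} immediately gives both series bounds.

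\textbf{Assertion 3.} This is the main computational step. Here I would exploit the fact that $(a_n,b_n^*)$ lies on the bounding hyperplane of $H_{a_n,b_n^*}$ (an easy check against $\eta_{a_n,b_n^*}$), together with the resolvent identities $a_n^*+L^*v_n^* = \gamma_n^{-1}(p_n-a_n)$ and $b_n^*-v_n^* = \mu_n^{-1}(Lp_n-b_n)$. Expanding $\langle x_n \mid s_{a_n,b_n^*}^*\rangle - \eta_{a_n,b_n^*}$ and substituting these identities, the cross terms cancel and one obtains the clean expression
\begin{equation*}
\langle x_n \mid s_{a_n,b_n^*}^*\rangle - \eta_{a_n,b_n^*} \;=\; \gamma_n^{-1}\|p_n-a_n\|^2 + \mu_n^{-1}\|Lp_n-b_n\|^2.
\end{equation*}
Since $x_{n+1/2}-x_n = \lambda_n(P_{H_n}(x_n)-x_n)$ and the halfspace projection satisfies $\|P_{H_n}(x_n)-x_n\| = (\langle x_n\mid s^*\rangle-\eta)_+/\|s^*\|$, bounding $\|s_{a_n,b_n^*}^*\|$ from above by a constant multiple of $\|p_n-a_n\|+\|Lp_n-b_n\|$ (again via the resolvent identities together with the triangle inequality and boundedness of $L$) yields a lower bound of the form $\|x_{n+1/2}-x_n\|^2 \ge c\,\lambda_n^2\bigl(\|p_n-a_n\|^2+\|Lp_n-b_n\|^2\bigr)$ with $c>0$ depending on $\gamma_n,\mu_n,\|L\|$ (assumed bounded away from the degenerate values as is standard in this setting). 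Combined with assertion~\ref{assertion_H4} of Proposition~\ref{prop:Haugazeau}, both series in assertion 3 are finite.

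\textbf{Assertion 4.} Here I would invoke assertion~\ref{assertion_H5} of Proposition~\ref{prop:Haugazeau}, so the task reduces to showing that every weak cluster point of $\{x_n\}$ lies in $Z$. Given a subsequence $x_{k_n}\rightharpoonup (\bar p,\bar v^*)$, the summability from assertion 3 forces $\|p_{k_n}-a_{k_n}\|\to 0$ and $\|Lp_{k_n}-b_{k_n}\|\to 0$, so $a_{k_n}\rightharpoonup\bar p$ and $b_{k_n}\rightharpoonup L\bar p$. Rewriting the resolvent identities, $a_{k_n}^* = \gamma_{k_n}^{-1}(p_{k_n}-a_{k_n}) - L^*v_{k_n}^* \rightharpoonup -L^*\bar v^*$ and $b_{k_n}^* \rightharpoonup \bar v^*$, so the graph pairs $(a_{k_n},a_{k_n}^*)\in\operatorname{graph}A$ and $(b_{k_n},b_{k_n}^*)\in\operatorname{graph}B$ converge weakly–strongly (using the vanishing residuals) to $(\bar p,-L^*\bar v^*)$ and $(L\bar p,\bar v^*)$. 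The sequentially weak–strong closedness of the graphs of maximally monotone operators (\cite[Prop.~20.38]{Bauschke_2011_convex_analysis}) then places these limits in $\operatorname{graph}A$ and $\operatorname{graph}B$ respectively, hence $(\bar p,\bar v^*)\in Z$. Proposition~\ref{prop:Haugazeau}(5) yields strong convergence of $x_n$ to $P_Z(x_0)$, and by decomposing into coordinates we conclude $p_n\to\bar p$ and $v_n^*\to\bar v^*$ with $(\bar p,\bar v^*)\in Z$.

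The main obstacle is the algebraic identity and norm estimate in assertion 3, where the cancellation of cross terms is essential and the upper bound on $\|s_{a_n,b_n^*}^*\|$ must be compatible with the quadratic right-hand side so as to produce a linear bound in $\|p_n-a_n\|^2+\|Lp_n-b_n\|^2$; the maximal monotonicity closure argument in assertion 4 is then routine.
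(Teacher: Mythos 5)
Your treatment of assertions 1 and 2 coincides with the paper's, and your assertion 3 correctly fills in the computation that the paper delegates to \cite[Proposition 3.5]{Alotaibi_2014_sucessive_fejer_approximation}: the identity $\langle x_n \mid s_{a_n,b_n^*}^*\rangle-\eta_{a_n,b_n^*}=\gamma_n^{-1}\|p_n-a_n\|^2+\mu_n^{-1}\|Lp_n-b_n\|^2$ and the bound $\|s_{a_n,b_n^*}^*\|\leq c\bigl(\|p_n-a_n\|+\|Lp_n-b_n\|\bigr)$ both check out (using $a_n^*+L^*b_n^*=\gamma_n^{-1}(p_n-a_n)+\mu_n^{-1}L^*(Lp_n-b_n)$), and together with $u^2+v^2\geq\tfrac12(u+v)^2$ they give the lower bound you state, modulo the caveat you already flag that $\lambda_n$ must be bounded away from $0$.

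The gap is in assertion 4. From $p_{k_n}-a_{k_n}\to 0$ and $Lp_{k_n}-b_{k_n}\to 0$ you only get that $a_{k_n}\rightharpoonup\bar p$ and $a_{k_n}^*=\gamma_{k_n}^{-1}(p_{k_n}-a_{k_n})-L^*v_{k_n}^*\rightharpoonup -L^*\bar v^*$ \emph{weakly}; neither component of the pair $(a_{k_n},a_{k_n}^*)$ converges strongly, and likewise for $(b_{k_n},b_{k_n}^*)$. The sequential closedness of the graph of a maximally monotone operator in \cite[Proposition 20.38]{Bauschke_2011_convex_analysis} requires weak convergence in one slot and \emph{strong} convergence in the other; weak--weak closedness of such graphs fails in general, so the step placing the limits in $\text{graph}\,A$ and $\text{graph}\,B$ does not follow as written. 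The standard repair --- and the one underlying the cited proof of \cite[Proposition 3.5]{Alotaibi_2014_sucessive_fejer_approximation} --- is to work with the single operator $M:(p,v^*)\mapsto (Ap+L^*v^*)\times(B^{-1}v^*-Lp)$ on $H\times G$, which is maximally monotone as the sum of $A\times B^{-1}$ and an everywhere-defined bounded skew linear operator, and which satisfies $Z=\text{zer}\,M$. Then $(a_{k_n},b_{k_n}^*)\rightharpoonup(\bar p,\bar v^*)$ weakly while $s_{k_n}^*=(a_{k_n}^*+L^*b_{k_n}^*,\,b_{k_n}-La_{k_n})\in M(a_{k_n},b_{k_n}^*)$ converges \emph{strongly} to $0$ (the $v_{k_n}^*$ cross-terms cancel, leaving only the vanishing residuals), and the weak--strong closedness of $\text{graph}\,M$ legitimately yields $(\bar p,\bar v^*)\in Z$; the rest of your argument then goes through.
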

	\begin{proof} 
		\mbox{\ }
		\begin{enumerate}
			\item The statement follows directly from item \ref{assertion_H2} of Proposition \ref{prop:Haugazeau}.
			\item By Proposition \ref{prop:Haugazeau},
			\begin{displaymath}
				\sum\limits_{n=0}^{+\infty} \|p_{n+1}-p_n\|^2+\sum\limits_{n=0}^{+\infty} \|v_{n+1}^*-v_n^*\|^2=\sum\limits_{n=0}^{+\infty}\|x_{n+1}-x_n\|^2<+\infty.
			\end{displaymath}
			\item The proof is similar to the proof of \cite[Proposition 3.5]{Alotaibi_2014_sucessive_fejer_approximation}.
			\item		  The proof is similar to the proof of \cite[Proposition 3.5]{Alotaibi_2014_sucessive_fejer_approximation}.
		\end{enumerate}		
	\end{proof}
	
	\begin{remark}
		\label{remark_6}
		Proposition \ref{theorem:best_projection} shows the importance of the condition $Z\subset \hsubset_n\subset H(x_{n},x_{n+1/2})$
		in proving the strong convergence of Iterative Scheme \ref{alg:generic_best}.
	\end{remark}
	
	\section{The choice of $C_{n}$}\label{sec:choice}
	One of the main contributions of the paper is to consider $\hsubset_n$ which use the information from the previous step. In this way Iterative Scheme \ref{alg:generic_best} becomes a scheme with memory in the sense that the construction of $x_{n+1}$ depends not only on $x_{n+1/2},\ x_{n},$\ but also on $x_{n-1+1/2},\ x_{n-1}$.
	
	We start with the following propositions.
	
	\begin{proposition}\label{properties:halfspacecombination}
		Let $x,u,v\in H$. Then $H(x,u)\cap H(x,v)\subset H(x,\tau u + (1-\tau)v)$ for all $\tau \in [0,1]$.
	\end{proposition}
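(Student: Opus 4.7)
The plan is to unfold the definition of $H(x,\cdot)$ and reduce the claim to a single algebraic identity. Fix $h \in H(x,u) \cap H(x,v)$ and $\tau \in [0,1]$, set $w := \tau u + (1-\tau)v$, and aim at showing $\langle h-w \mid x-w\rangle \le 0$. The observation I intend to exploit is that both differences split as convex combinations: $h-w = \tau(h-u)+(1-\tau)(h-v)$ and $x-w = \tau(x-u)+(1-\tau)(x-v)$. Expanding the inner product and regrouping, the extra cross terms will be absorbed by the standard Hilbertian identity $\tau\|u\|^2+(1-\tau)\|v\|^2-\|\tau u+(1-\tau)v\|^2 = \tau(1-\tau)\|u-v\|^2$, which quantifies the strong convexity of the squared norm.

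The computation is designed so that the cross terms collapse and deliver the key identity
\begin{equation*}
\langle h-w \mid x-w\rangle = \tau\langle h-u \mid x-u\rangle + (1-\tau)\langle h-v \mid x-v\rangle - \tau(1-\tau)\|u-v\|^2.
\end{equation*}
From here the conclusion is immediate: the first two summands on the right-hand side are $\le 0$ by the hypothesis $h \in H(x,u)\cap H(x,v)$, and the third is $\le 0$ because $\tau(1-\tau)\ge 0$. Hence $\langle h-w \mid x-w\rangle \le 0$, which by definition gives $h \in H(x,w)$; since $h$ was arbitrary, this yields $H(x,u)\cap H(x,v) \subset H(x,w)$ as required.

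I do not anticipate any real obstacle: the statement is a routine convexity-type inequality and reduces to the algebraic identity above. The only mild subtlety is choosing the right decomposition of $\langle h-w \mid x-w\rangle$ so that the unwanted cross terms collect precisely into the non-positive quantity $-\tau(1-\tau)\|u-v\|^2$; once this identity is written down, the proof is essentially one line.
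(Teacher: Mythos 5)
Your proof is correct and follows essentially the same route as the paper: both expand $\langle h-w\mid x-w\rangle$ and regroup it as $\tau\langle h-u\mid x-u\rangle+(1-\tau)\langle h-v\mid x-v\rangle$ plus a residual. Your version is in fact slightly cleaner, since you identify the residual exactly as $-\tau(1-\tau)\|u-v\|^2$, whereas the paper estimates it via Cauchy--Schwarz (and contains a harmless typo, writing $(\|v\|+\|w\|)^2$ where $(\|v\|-\|w\|)^2$ is meant).
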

	
	\begin{proof} Let $h\in H(x,u)\cap H(x,v)$, i.e
		\begin{displaymath}
			\langle h-u \ |\ x - u\rangle \leq 0 \quad \text{and} \quad \langle h-v \ |\ x - v\rangle \leq 0.
		\end{displaymath}
		For any $\tau \in [0,1]$ we have
		\begin{align*}
			& \langle h - \tau v - (1-\tau)w \ |\ x-\tau v - (1-\tau)w \rangle \\
			& = \langle h \ |\ x \rangle - \tau \langle h \ |\ v \rangle - \tau \langle v \ |\  x \rangle - (1-\tau) \langle h \ |\  w \rangle - (1-\tau) \langle w \ |\  x \rangle\\
			&+ \tau^2 \langle v \ |\ v \rangle +(1-\tau)^2 \langle w \ |\ w \rangle +2\tau (1-\tau)\langle v \ |\ w\rangle \\
			& = \tau \langle h \ |\ x \rangle - \tau \langle h \ |\ v \rangle - \tau \langle v \ |\  x \rangle + \tau \langle v \ |\ v \rangle \\
			& +(1-\tau )  \langle h \ |\ x \rangle - (1-\tau) \langle h \ |\  w \rangle - (1-\tau) \langle w \ |\  x \rangle + (1-\tau) \langle w \ |\ w \rangle \\
			&		+ \tau^2 \langle v \ |\ v\rangle + (1-\tau)^2 \langle w \ |\ w\rangle + 2 \tau (1-\tau) \langle v \ |\ w \rangle - \tau \langle v \ |\ v \rangle - (1-\tau)\langle w \ |\ w \rangle \\
			& \leq \tau^2 \langle v \ |\ v\rangle + (1-\tau)^2 \langle w \ |\ w\rangle + 2 \tau (1-\tau) \langle v \ |\ w \rangle - \tau \langle v \ |\ v \rangle - (1-\tau)\langle w \ |\ w \rangle\\
			&= \tau (\tau - 1) \langle v \ |\ v\rangle + (1-\tau)(-\tau) \langle w \ |\ w \rangle + 2\tau (1-\tau) \langle v \ |\ w \rangle \\
			&\leq \tau (\tau - 1) \|v\|^2 + (1-\tau)(-\tau) \| w\|^2 + 2 \tau(1-\tau) \|v\| \|w\| \\
			& = \tau (\tau -1) ( \|v\|^2 -2 \|v\| \|w\| + \|w\|^2 )= - \tau (1-\tau) (\|v\|+\|w\|)^2\leq 0.
		\end{align*}
		Thus $h \in H(x,\tau u + (1-\tau)v)$.
	\end{proof}
	
	The following proposition provides examples of 
	sets $C_{n}$ with memory satisfying requirements of Proposition \ref{theorem:best_projection} (see Remark \ref{remark_6}).
	
	\begin{proposition}\label{prop:setdefinitions}
		For $\hsubset_n$ defined as 
		\begin{align}
			&\hsubset_n :=   H(x_n, x_{n+\frac{1}{2}}) \cap H(x_{n-1}, x_{n-\frac{1}{2}})\ \text{for}\ n\geq 1 \ \text{and} \ \hsubset_0=H(x_0,x_{1/2}),\label{set:halfspace1}\\
			&\hsubset_n :=   H(x_n, x_{n+\frac{1}{2}}) \cap H(x_{0}, x_{n-1})  \ \text{for} \ n\geq 1 \ \text{and} \  \hsubset_0=H(x_0,x_{1/2}),\label{set:halfspace2}\\
			&\begin{aligned}\label{set:halfspace3}
				\hsubset_n :=&  H(x_n, x_{n+\frac{1}{2}}) \cap H(x_0, \tau_n x_n + (1-\tau_n) x_{n-1}) )  \ \text{for}\ \tau_n \in (0,1),  n\geq 1 \\ &\text{and}\  \hsubset_0=H(x_0,x_{1/2})
			\end{aligned}
		\end{align}
		the assertions 1-5 of Proposition \ref{prop:Haugazeau} holds.
	\end{proposition}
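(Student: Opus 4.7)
The plan is to verify, for each of the three definitions \eqref{set:halfspace1}--\eqref{set:halfspace3}, that the sandwiching condition $Z\subset \hsubset_n\subset H(x_n,x_{n+1/2})$ from Proposition \ref{prop:Haugazeau} is in force; once this is done, assertions 1--5 follow by invoking Proposition \ref{prop:Haugazeau} directly. The upper inclusion $\hsubset_n\subset H(x_n,x_{n+1/2})$ is immediate in all three cases because $H(x_n,x_{n+1/2})$ appears explicitly as one of the factors in the intersection defining $\hsubset_n$. So the real work is the lower inclusion $Z\subset \hsubset_n$, which I would establish by induction on $n$.

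For the base case $n=0$ we have $\hsubset_0=H(x_0,x_{1/2})$, and I would argue that $Z\subset \secondhsubset_0\subset H(x_0,P_{\secondhsubset_0}(x_0))\subset H(x_0,x_{1/2})$, where the first inclusion is the standing assumption on $\secondhsubset_n$, the second is the characterization of the metric projection onto a closed convex set, and the third uses the relaxation with $\lambda_0\in(0,1]$ (i.e.\ $x_{1/2}$ lies on the segment $[x_0,P_{\secondhsubset_0}(x_0)]$, so the halfspace only enlarges). This same chain, applied at a general step $n$, shows that $Z\subset H(x_n,x_{n+1/2})$ holds automatically from the Fej{\'e}rian step, so throughout the induction I obtain the first factor of each $\hsubset_n$ for free.

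For the inductive step, assume the sandwich condition holds for all indices $<n$, so that the conclusions of Proposition \ref{prop:Haugazeau} are available at those indices. For definition \eqref{set:halfspace1}, the second factor $H(x_{n-1},x_{n-1/2})$ contains $Z$ by the previous-step instance of the Fej{\'e}rian argument just described. For definition \eqref{set:halfspace2}, the second factor $H(x_0,x_{n-1})$ contains $Z$ by assertion \ref{assertion_H1} of Proposition \ref{prop:Haugazeau} applied at index $n-1$, which is legitimate because the inductive hypothesis guarantees $Z\subset \hsubset_{n-1}\subset H(x_{n-1},x_{n-1/2})$. For definition \eqref{set:halfspace3}, I would first obtain $Z\subset H(x_0,x_n)$ and $Z\subset H(x_0,x_{n-1})$ from assertion \ref{assertion_H1} at indices $n$ and $n-1$ respectively (note that $Z\subset H(x_0,x_n)$ at the current step follows from the same inductive chain since $x_n=P_{H(x_0,x_{n-1})\cap \hsubset_{n-1}}(x_0)$ and $Z\subset H(x_0,x_{n-1})\cap \hsubset_{n-1}$), and then convert these two inclusions into $Z\subset H(x_0,\tau_n x_n+(1-\tau_n)x_{n-1})$ via Proposition \ref{properties:halfspacecombination}.

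The only mildly subtle point I anticipate is the circularity issue in case \eqref{set:halfspace3}: to place $x_n$ inside $H(x_0,\cdot)$-type halfspaces via Proposition \ref{prop:Haugazeau}\ref{assertion_H1}, one must be sure that the whole inductive engine has already been set in motion at indices $n-1$ and below. I would therefore phrase the induction so that at step $n$ the hypothesis explicitly provides $Z\subset \hsubset_k$ (and hence the conclusions of Proposition \ref{prop:Haugazeau}) for every $k\le n-1$, which in particular yields both $Z\subset H(x_0,x_n)$ and $Z\subset H(x_0,x_{n-1})$ before Proposition \ref{properties:halfspacecombination} is applied. With the sandwich inclusion thus verified in all three cases, the five assertions transfer verbatim from Proposition \ref{prop:Haugazeau} and the proof is complete.
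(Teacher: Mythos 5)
Your proposal is correct and follows essentially the same route as the paper: both reduce the claim to verifying the sandwich condition $Z\subset \hsubset_n\subset H(x_n,x_{n+1/2})$, obtain $Z\subset H(x_n,x_{n+1/2})$ from the Fej\'erian step via the projection characterization $D\subset H(x,P_D(x))$ and the relaxation $\lambda_n\in(0,1]$, run an induction for \eqref{set:halfspace2} and \eqref{set:halfspace3} using $H(x_0,x_{n-1})\cap \hsubset_{n-1}\subset H(x_0,x_n)$, and invoke Proposition \ref{properties:halfspacecombination} for \eqref{set:halfspace3}. Your explicit handling of the potential circularity in case \eqref{set:halfspace3} is a welcome clarification of what the paper does implicitly.
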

	
	\begin{proof}
		To apply Proposition \ref{prop:Haugazeau} we need only to show that $\hsubset_n$ are closed and convex and $Z\subset \hsubset_n\subset H_n$. The sets $\hsubset_n$ are closed and convex as intersections of finitely many closed halfspaces. By construction of $x_{n+1/2}$ we have $Z\subset H(x_n,x_{n+1/2})$ for all $n \in \mathbb{N}$.
		
		\begin{enumerate}
			\item For $\hsubset_n$ given by (\ref{set:halfspace1}) we have $Z \subset H(x_n,x_{n+1/2})\cap H(x_{n-1},x_{n-1+1/2})$ since $Z\subset H(x_n,x_{n+1/2})$.
			\item Let $\hsubset_n$ be given by (\ref{set:halfspace2}).	By construction, $Z\subset H(x_0,x_{1/2})=H(x_0,x_1)=\hsubset_0$. 
			Let $n \in \mathbb{N}$ and suppose $Z\subset \hsubset_k=H(x_{k},x_{k+1/2})\cap H(x_{0},x_{k-1})$ for all $1\leq k\leq n$. We have 
			\begin{align*}
				& Z\subset  H(x_0,x_{n-1})\cap  H(x_{n-1},x_{n-1+1/2})\cap H(x_{0},x_{n-2})= \hsubset_{n}\cap H(x_{0},x_{n-2})\\ 
				& \implies Z \subset H(x_0,P_{\hsubset_{n}\cap H(x_{0},x_{n-2})}(x_0))\\
				& \Leftrightarrow Z \subset H(x_0,x_n) 	\Leftrightarrow Z \subset H(x_0,x_n)\cap H(x_{n+1},x_{n+1+1/2})=\hsubset_{n+1}.
			\end{align*}
			By induction, $Z\subset \hsubset_n$ for all $n\geq 0$.
			\item Let $\hsubset_n$ be given by  (\ref{set:halfspace3}).
			By construction, $Z\subset H(x_0,x_{1/2})=H(x_0,x_1)=\hsubset_0$. By Proposition \ref{properties:halfspacecombination}, we have
			\begin{align*}
				& Z\subset H(x_0,x_n) \cap  H(x_0,x_{n-1}) \\
				& \implies  Z \subset H(x_0,\tau_n x_n + (1-\tau_n)x_{n-1})\  \text{for all}\ \tau_n \in (0,1) .
			\end{align*}
			Let $n \in \mathbb{N}$ and suppose $Z\subset \hsubset_k=H(x_{k},x_{k+1/2})\cap H(x_{0},x_{k-1})$ and $Z\subset H(x_0,x_k)$ for all $1\leq k\leq n$. Then
			\begin{align*}
				& Z \subset \hsubset_n =  H(x_n, x_{n+1/2}) \cap H(x_0, \tau_n x_n + (1-\tau_n) x_{n-1}) )\\
				& \implies  Z \subset 	H(x_n, x_{n+1/2}) \cap H(x_0, \tau_n x_n + (1-\tau_n) x_{n-1}) ) \cap H(x_0,x_n)\\
				& \implies Z\subset H(x_0,P_{H(x_0,H(x_n, x_{n+1/2}) \cap H(x_0, \tau_n x_n + (1-\tau_n) x_{n-1}) ) \cap H(x_0,x_n)}(x_0))\\
				& \Leftrightarrow Z \subset H(x_0,P_{\hsubset_n\cap H(x_0,x_n)}(x_0)) = H(x_0,x_{n+1})\\
				& \Leftrightarrow Z\subset H(x_0,x_n)\cap H(x_{n+1})\implies Z \subset H(x_0,\tau_{n+1}x_{n+1}+(1-\tau_{n+1})x_n).
			\end{align*}
			Thus $Z\subset \hsubset_n$ for all $n\in \mathbb{N}$.
		\end{enumerate}
		
	\end{proof}

	
	\subsection{Closed-form expressions for projectors onto intersection of three halfspaces}

	In this subsection we recall the closed-form formulas for projectors onto polyhedral sets as given in \cite{explicit_formulas_KR}.
	These halfspaces are given in a form
	\begin{equation} \label{eq:halfspace}
		A_i=\{h \in H\times G \ |\ \langle h \ |\ u_i\rangle \leq \eta_i \},\ i=1,\dots,m
	\end{equation}
	where $u_i\neq 0$, $\eta_i\in\mathbb{R}$, $i=1,\dots,m$, $m\in\mathbb{N}$. 
	
	Let $w_i:=\langle x \ |\ u_i \rangle -\eta_i$, $i \in M:=\{1,\dots,m\}$ and let $G:=[\langle u_i \ |\ u_j \rangle ]_{i,j\in M}$. For any sets $I\subset M$, $J\subset M$, $I,J\neq \emptyset$ the symbol $G_{I,J}$ denote the submatrix of $G$ composed by rows indexed by $I$ and columns indexed by $J$ only.
	Let $s_I(a):=\{ b \in I \ |\ b\leq a  \}$. We define
	\begin{equation*}
		B_I^a:=\left\{\begin{array}{lcl}
			(-1)^{|s_I(a)|} & \text{if} & a \in I,  \\
			(-1)^{|I|+1} & \text{if} & a \notin I.
		\end{array}\right.
	\end{equation*}
	
	\begin{theorem} (\cite[Theorem 2]{explicit_formulas_KR}) \label{prop:projection_conditions}
		Let $m\in \mathbb{N},$ $m\neq0$ and let $M=\{1,\dots,m\}$. Let $A=\bigcap\limits_{i=1}^m A_i\neq\emptyset$, $x\notin A$. Let $\text{rank}\ G=k$. Let $\emptyset \neq I\subset M$, $|I|\leq k$ be such that $\det G_{I,I}\neq 0$. Let
		\begin{equation}\label{coeff:plus}
			\nu_i:= \left\{
			\begin{array}{lcl}
				\sum_{j \in I} w_j B_I^{j} B_I^{i}
				\det G_{I\backslash j , I\backslash i} & \text{if} & |I|>1,\\
				w_i & \text{if} & |I|=1
			\end{array}\right.
			\quad \text{for all} \quad i \in I
		\end{equation}
		and, whenever  $ I^\prime:=M\backslash I$ is nonempty, let
		\begin{equation}\label{coeff:notplus}
			\nu_{i^\prime}:= 
			\sum_{j \in I\cup \{i^\prime\}} w_j B_I^{j} B_I^{i^\prime}
			\det G_{I , (I \cup i^\prime)\backslash j  }
			\quad \text{for all} \quad i^\prime \in I^\prime.
		\end{equation}

		If $\nu_i>0$ for $i \in I$ and $\nu_{i^\prime}\leq 0$ for all $i^\prime\in I^\prime$, then 
		\begin{equation}\label{formula:projection}
			P_A(x)=x-\sum\limits_{i \in I} \frac{\nu_i}{\det G_{I,I}}u_{i}.
		\end{equation}
		Moreover, among all the elements of the set $\Delta$ of all subsets $I\subset M$ there exists at least one $I\in \Delta$ for which: (1) $\det G_{I,I}\neq 0$, (2) the coefficients $\nu_i$, $i \in I$ given by \eqref{coeff:plus} are positive, (3) the coefficients $\nu_{i^\prime}$, $i^\prime \in I^{\prime}$ given by \eqref{coeff:notplus} are nonpositive.
	\end{theorem}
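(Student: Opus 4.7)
The plan is to recognize this theorem as a Cramer-rule reformulation of the Karush--Kuhn--Tucker (KKT) optimality conditions for the quadratic program
\[
\min_{y\in H\times G}\ \tfrac{1}{2}\|y-x\|^{2}\quad \text{subject to}\quad \langle y\mid u_{i}\rangle\leq \eta_{i},\ i\in M,
\]
whose unique minimizer is $P_{A}(x)$. I would first write the KKT system: there exists $\lambda=(\lambda_{i})_{i\in M}\geq 0$ with $y=x-\sum_{i\in M}\lambda_{i}u_{i}$, primal feasibility $\langle y\mid u_{i}\rangle\leq \eta_{i}$, and complementary slackness $\lambda_{i}(\langle y\mid u_{i}\rangle-\eta_{i})=0$. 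Since $y=P_{A}(x)\neq x$, at least one constraint is active with a positive multiplier, and the support $I$ of $\lambda$ may be chosen so that $\{u_{i}\}_{i\in I}$ is linearly independent; this forces $|I|\leq k=\operatorname{rank}G$ and $\det G_{I,I}\neq 0$.

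Next I would turn the KKT conditions into an explicit formula. Inserting $y=x-\sum_{i\in I}\lambda_{i}u_{i}$ into the equality $\langle y\mid u_{j}\rangle=\eta_{j}$ for $j\in I$ yields the symmetric linear system $G_{I,I}\lambda_{I}=w_{I}$, where $w_{j}=\langle x\mid u_{j}\rangle-\eta_{j}$. Cramer's rule immediately gives $\lambda_{i}=\nu_{i}/\det G_{I,I}$, provided one identifies the explicit expansion with $\nu_{i}$ in \eqref{coeff:plus}. The identification amounts to expanding, along the column of $w_{I}$, the determinant obtained from $G_{I,I}$ by replacing its $i$-th column by $w_{I}$. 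Because $I$ is an arbitrary subset of $M$ rather than $\{1,\dots,|I|\}$, the combinatorial signs in the cofactor expansion do not reduce to $(-1)^{i+j}$; the factor $B_{I}^{a}$ is precisely the sign correction induced by the position of $a$ inside $I$ (and, in the non-basic case, the position of the inserted index $a=i^{\prime}$ relative to $I$). This bookkeeping step is where I expect the work to be.

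Once the multipliers have this explicit form, the two sign conditions acquire a direct meaning. The requirement $\nu_{i}>0$ for $i\in I$ is dual feasibility together with strict activity; it rules out degenerate choices of $I$. For $i^{\prime}\in I^{\prime}=M\setminus I$, substituting $y$ into the inactive constraint gives $\langle y\mid u_{i^{\prime}}\rangle-\eta_{i^{\prime}}=w_{i^{\prime}}-\sum_{i\in I}\lambda_{i}G_{i^{\prime}i}$; multiplying by $\det G_{I,I}$ and expanding along the column indexed by $i^{\prime}$ of the bordered matrix $G_{I,(I\cup i^{\prime})}$ produces exactly $\nu_{i^{\prime}}$ as in \eqref{coeff:notplus}, so $\nu_{i^{\prime}}\leq 0$ is primal feasibility of $y$. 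Together with $y=x-\sum_{i\in I}(\nu_{i}/\det G_{I,I})u_{i}$, this establishes \eqref{formula:projection} by the sufficiency of the KKT conditions in convex QP.

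The existence assertion is then essentially a Carathéodory-type step. Because $A\neq\emptyset$ the unique projection $y^{*}=P_{A}(x)$ exists and satisfies the KKT conditions with some multiplier vector $\lambda^{*}\geq 0$ supported on the active index set at $y^{*}$. If $\{u_{i}\}_{i\in\operatorname{supp}\lambda^{*}}$ is not linearly independent, I would reduce the support: any nontrivial linear relation among the $u_{i}$ with $i$ active can be used to shift $\lambda^{*}$ along the kernel of $G_{\operatorname{supp},\operatorname{supp}}$ until a coordinate hits zero, while preserving both $y=x-\sum\lambda_{i}u_{i}$ and nonnegativity. Iterating produces $I$ with $\{u_{i}\}_{i\in I}$ linearly independent (hence $|I|\leq k$ and $\det G_{I,I}\neq 0$) and $\lambda_{i}>0$ for $i\in I$, which by the correspondence above is $\nu_{i}>0$ for $i\in I$ and $\nu_{i^{\prime}}\leq 0$ for $i^{\prime}\in I^{\prime}$. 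The delicate point in this paragraph is ensuring strict positivity on $I$ rather than $\lambda_{i}\geq 0$; the reduction above handles this because it stops exactly when one strictly positive coordinate vanishes, keeping all remaining ones strictly positive.
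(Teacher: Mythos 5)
The paper does not actually prove this theorem: it is imported verbatim as Theorem~2 of the cited reference \cite{explicit_formulas_KR}, so there is no in-paper proof to compare against. Judged on its own, your reconstruction is the natural one and is essentially correct: the KKT system for the projection QP with affine constraints needs no constraint qualification, Cramer's rule on $G_{I,I}\lambda_I = w_I$ identifies $\lambda_i = \nu_i/\det G_{I,I}$ once the cofactor signs are matched to $B_I^j B_I^i = (-1)^{|s_I(j)|+|s_I(i)|}$, the condition $\nu_{i'}\le 0$ encodes feasibility of the inactive constraints, and the Carath\'eodory-style support reduction along the kernel of the Gram matrix yields the existence claim (with nonemptiness of $I$ following from $x\notin A$, hence $P_A(x)\neq x$).

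Two points deserve to be made explicit rather than left implicit. First, all of your sign translations ($\nu_i>0\Leftrightarrow\lambda_i>0$ and $\nu_{i'}\le 0\Leftrightarrow\langle y\mid u_{i'}\rangle\le\eta_{i'}$) silently use $\det G_{I,I}>0$; this is true because $G_{I,I}$ is a nonsingular Gram matrix, hence positive definite, but it must be said, since with the wrong sign the whole characterization inverts. Second, the identity
\begin{equation*}
\bigl(\langle y\mid u_{i'}\rangle-\eta_{i'}\bigr)\det G_{I,I}
= w_{i'}\det G_{I,I}-\sum_{i\in I}\nu_i\,\langle u_i\mid u_{i'}\rangle
= \nu_{i'}
\end{equation*}
is the one genuinely computational step of the argument (a Laplace expansion of a bordered determinant along the row of $w$'s, with the $B_I^{i'}=(-1)^{|I|+1}$ factor accounting for the position of the appended index); you assert it rather than verify it, and this is exactly where an error in the combinatorial bookkeeping would hide. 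With those two items filled in, the proof is complete and matches the standard route one would expect the cited reference to take.
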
	
	To obtain the closed-form expression formula for projection of a point on intersection of three halfspaces we propose the following finite algorithm for finding $\nu_i$ as given in formula \eqref{formula:projection}.
	\begin{algorithm}[H]
		\caption{Algorithm for finding ${\nu}=[{\nu}_i]_{i \in\{ 1,2,3\}}$}\label{algorithm:searchnu}
		\begin{algorithmic}
			\STATE{Let $\mathcal{K}$ be a set of all nonempty subsets of K=\{1,2,3\}}
			\WHILE{$\mathcal{K}\neq \emptyset$}
			\STATE{Choose randomly $I\in \mathcal{K}$}
			\IF{$\det G_{I,I}\neq 0$}
			\STATE{Find $\nu=[\nu_i]_{i \in I}$ such that $G_{I,I}\nu=[\langle x \ |\ u_i \rangle - \eta_i]_{i \in I}$}
			\IF{$\nu>0$ }
			\IF{for all $i\in K\backslash I$, $\langle x - \sum_{k\in I} \nu_k u_k \ |\ u_i \rangle - \eta_i  \leq 0$ }
			\STATE{Terminate, put ${\nu}_i=0$ for $i\in K\backslash I$}
			\ENDIF
			\ENDIF
			\ENDIF
			\STATE{${\mathcal{K}}:={\mathcal{K}}\backslash I$}
			\ENDWHILE
		\end{algorithmic}
	\end{algorithm}
	Note that  
	Iterative Scheme \ref{algorithm:searchnu} can be easily parallelized. For three halfspaces (i.e. $m=3$ in \eqref{eq:halfspace}) at most $7$ subsets $I\in \mathcal{K}$ need to be checked to calculate  coefficients $\nu_i$, $i=1,2,3$ of formula \eqref{formula:projection}. Note that the above defined Iterative Scheme \ref{algorithm:searchnu} can be useful for several algorithms, i.e. for computation of next iterate in \cite{VanHieu2017}.
	
	On the other hand, when considering Iterative scheme 
	\ref{alg:generic_best} with halfspaces  generated as
	$$
	H(x_{0},x_{n})\cap C_{n},
	$$
	where $C_{n}$ are as in 
	Proposition \ref{prop:setdefinitions} the number of iterations can be reduced to 4. This is the content of the following Proposition.
	
	Let  $W:=H(x_0,x_n)\cap H(x_n,x_{n+1/2}) \cap A_{3}$, where $A_3$ is given by one the following
	\begin{equation}\label{setpropB}
		\left\{\begin{array}{l}
			H(x_n,x_{n+1/2}),\\
			H(x_0,x_{n-1}),\\
			H(x_0, \tau_n x_n + (1-\tau_n) x_{n-1}) ,\  \tau_n\in(0,1).
		\end{array}\right.
	\end{equation}
	For simplicity, in Proposition \ref{projection_reduced} we use $H(a,b)=A_{3}$.
	
	\begin{proposition}
		\label{projection_reduced}
		For finding projection of $x_0$ onto $W$ with the help of Iterative Scheme \ref{algorithm:searchnu} at most 4 subsets $I\in \mathcal{K}$ need to be checked.	
	\end{proposition}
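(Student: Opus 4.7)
My plan is to show that at the projection $P := P_W(x_0)$ the constraint corresponding to $A_2 = H(x_n, x_{n+1/2})$ must be active, so the KKT active set $I^\ast$ given by Theorem \ref{prop:projection_conditions} lies in the four-element collection $\{\{2\},\{1,2\},\{2,3\},\{1,2,3\}\}$. The remaining three subsets of $\{1,2,3\}$ can then be excluded a priori from $\mathcal{K}$, which yields the claimed bound on the number of rounds of Iterative Scheme \ref{algorithm:searchnu}.

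The preparatory step is to verify that $x_n \in A_1 \cap A_3$ for each choice of $A_3$ appearing in \eqref{setpropB}. The inclusion $x_n \in A_1 = H(x_0, x_n)$ is immediate from the definition of $H(\cdot,\cdot)$. For $A_3$, I exploit the Haugazeau update at iteration $n-1$, namely $x_n = P_{H(x_0, x_{n-1}) \cap C_{n-1}}(x_0) \in H(x_0, x_{n-1}) \cap C_{n-1}$, together with the inclusion $C_{n-1} \subseteq H(x_{n-1}, x_{n-1/2})$ used throughout Section \ref{sec:choice}; this directly handles the cases $A_3 = H(x_{n-1}, x_{n-1/2})$ and $A_3 = H(x_0, x_{n-1})$. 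The remaining case $A_3 = H(x_0, \tau_n x_n + (1-\tau_n) x_{n-1})$ reduces after a short expansion to verifying $(1-\tau_n)\langle x_n - x_{n-1} \mid x_0 - x_{n-1}\rangle - \tau_n(1-\tau_n)\|x_n - x_{n-1}\|^2 \leq 0$, which follows at once from $\langle x_n - x_{n-1} \mid x_0 - x_{n-1}\rangle \leq 0$.

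Because $x_n = P_{A_1}(x_0)$ and $x_n \in A_1 \cap A_3 \subseteq A_1$, the usual monotonicity of distance under shrinking of the feasible set forces $x_n = P_{A_1 \cap A_3}(x_0)$. I then argue by contradiction: assume the constraint $A_2$ is inactive at $P$, i.e., $P \in \operatorname{int}(A_2)$. The variational inequality characterising the projection of $x_0$ onto the convex set $A_1 \cap A_3$ gives $\langle x_0 - x_n \mid P - x_n \rangle \leq 0$, so the convex quadratic $t \mapsto \|x_0 - x_n - t(P - x_n)\|^2$ has non-negative derivative at $t = 0$ and is strictly increasing on $(0,1]$ whenever $P \neq x_n$. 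Under the non-degeneracy assumption $x_n \neq x_{n+1/2}$ (otherwise the Fej{\'e}rian step stabilises at $x_n$ and the iteration terminates trivially), we have $x_n \notin A_2$ while $P \in A_2$, so the segment $[x_n, P] \subseteq A_1 \cap A_3$ crosses $\partial A_2$ at some interior point $Q \in W$ with $\|x_0 - Q\| < \|x_0 - P\|$, contradicting $P = P_W(x_0)$.

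Hence $2 \in I^\ast$, so $I^\ast \in \{\{2\},\{1,2\},\{2,3\},\{1,2,3\}\}$, and restricting the enumeration in Iterative Scheme \ref{algorithm:searchnu} to this four-element collection guarantees termination within at most four rounds. The main obstacle I anticipate is the uniform verification of $x_n \in A_3$ across all three cases in \eqref{setpropB}, since the third case is the only one that requires an explicit algebraic manipulation rather than a direct invariance argument; a minor secondary point is handling the degenerate configuration $x_n = x_{n+1/2}$, in which $A_2$ collapses to the whole space and the proposition becomes vacuous.
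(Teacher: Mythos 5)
Your route is genuinely different from the paper's. The paper argues case by case: for each of $I=\{1\}$, $I=\{3\}$, $I=\{1,3\}$ it computes the candidate coefficients of Iterative Scheme \ref{algorithm:searchnu} explicitly and shows that either some coefficient fails to be positive or the feasibility check for an index outside $I$ fails; the only inputs are $x_n=P_{H(x_0,x_n)}(x_0)$, $x_n\in A_3$ and $x_n\notin H(x_n,x_{n+1/2})$. You instead establish the single geometric fact $x_n=P_{A_1\cap A_3}(x_0)$ and deduce that the constraint $A_2$ must be active at $P:=P_W(x_0)$, which explains \emph{why} the three subsets are useless rather than merely verifying it. Your check that $x_n\in A_3$ in all three cases of \eqref{setpropB} is correct and is exactly the ingredient the paper uses implicitly (the first line of \eqref{setpropB} should indeed be read as $H(x_{n-1},x_{n-1+1/2})$, as you do), and your handling of the degenerate case $x_n=x_{n+1/2}$ is consistent with the paper's tacit assumption.

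There is, however, one gap: the final inference from ``$A_2$ is active at $P$'' to ``$2\in I^\ast$''. The acceptance test of Iterative Scheme \ref{algorithm:searchnu} requires the multipliers $\nu_i$, $i\in I$, to be \emph{strictly} positive, and activity of a constraint at the projection does not by itself force that constraint to carry a positive multiplier; a priori a subset $I\subseteq\{1,3\}$ could produce a candidate point lying on the boundary of $A_2$, which would pass the test $\langle\,\cdot\mid u_2\rangle-\eta_2\le 0$. So activity alone does not exclude $\{1\}$, $\{3\}$, $\{1,3\}$. Your own intermediate result closes the gap: if some $I\subseteq\{1,3\}$ were accepted, the accepted point $y=x_0-\sum_{i\in I}\nu_i u_i$ satisfies $\langle y\mid u_i\rangle=\eta_i$ for $i\in I$ and $x_0-y=\sum_{i\in I}\nu_i u_i$ with $\nu_i>0$, hence $\langle x_0-y\mid h-y\rangle=\sum_{i\in I}\nu_i\bigl(\langle h\mid u_i\rangle-\eta_i\bigr)\le 0$ for every $h\in A_1\cap A_3$, while the feasibility checks give $y\in A_1\cap A_2\cap A_3$; therefore $y=P_{A_1\cap A_3}(x_0)=x_n$, contradicting $y\in A_2$ and $x_n\notin A_2$ in the nondegenerate case. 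Replacing the activity argument by this observation makes your proof complete and equivalent in strength to the paper's.
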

	\begin{proof}
		
		We show that the projection of $x_0$ onto $W$ does not require the  cases $I=\{1\}$, $I=\{3\}$, $I=\{1,3\}$ to be checked.
		\begin{enumerate}
			\item Suppose $I=\{1\}$. Then $	\nu_1=\frac{\langle x_0-x_n \ |\ x_0 - x_n \rangle}{\|x_0-x_n\|^2}=1 $, $\eta_2=\langle x_{n+1/2} \ |\ x_n-x_{n+1/2} \rangle$	and for $2 \in K\backslash I$ we have 
			\begin{align*}
				\langle x_0-\nu_1 (x_0-x_n) \ |\ x_n-x_{n+1/2} \rangle -\eta_2>0.	 
			\end{align*}
			\item Suppose $I=\{3\}$.Then
			\begin{displaymath}
				\nu_3=\frac{\langle x_0-b \ |\ a-b \rangle}{\|a-b\|^2}
			\end{displaymath}
			and for $1 \in K\backslash I$ we have 
			\begin{align}
				&\langle x_0-\nu_3 (a-b) \ |\ x_0-x_n \rangle - \eta_1\notag\\
				&=\langle P_{H(a,b)}(x_0)-x_n \ |\ x_0 - x_n \rangle\notag\\
				&=\langle P_{H(a,b)}(x_0)-x_n \ |\ x_0 -P_{H(a,b)}(x_0)  \rangle\notag\\
				&+\langle P_{H(a,b)}(x_0)-x_n \ |\ P_{H(a,b)}(x_0) - x_n \rangle\geq 0.\label{inequality:1}
			\end{align}
			If equality in \eqref{inequality:1} holds then $P_{H(a,b)}(x_0)=x_n$. Then for $2 \in K\backslash I$ we have 
			\begin{align*}
				&\langle x_0-\nu_3(a-b) \ |\ x_n - x_{n+1/2} \rangle - \eta_2\\
				&=\langle P_{H(a,b)}(a,b)-x_{n+1/2} \ |\ x_n - x_{n+1/2} \rangle \\
				& = \langle x_n-x_{n+1/2} \ |\ x_n - x_{n+1/2} \rangle>0.
			\end{align*}
			\item Suppose, $I=\{1,3\}$. Then
			\begin{align*}
				\nu_3&=-\|x_0-x_n\|^2 \langle a -b \ |\ x_0-x_n \rangle + \langle x_0-b \ |\ a-b \rangle \|x_0-x_n\|^2\\
				&=\|x_0-x_n\|^2 \langle  x_n - b \ |\ a-b \rangle\leq 0
			\end{align*}
			because $x_n\in H(a,b)$.
			
		\end{enumerate}
		This shows that the choices $I=\{1\}$, $I=\{3\}$, $I=\{1,3\}$ do not lead to suitable projection weights $\nu_i\ge 0$, $i=1,2,3$.
	\end{proof}

	\section{Convergence analysis}\label{sec:convergence_analysis}
	In this section we analyse convergence properties of Iterative Scheme \ref{alg:generic_best}. To this aim we	introduce \textit{attraction property}  (Proposition \ref{prop:inertial_property}). The proposed results provide:
	\begin{itemize}
		\item new measure of quality of the solution generated by Iterative Scheme \ref{alg:generic_best}. Note that it was shown in \cite{Alotaibi_2015_best,strong_convergence_fo_block}
		that with every iteration, $x_n$ is further from $x_0$. However, there was no results relating $x_n$ and the solution $P_Z(x_0)$. 
		By \textit{attraction property}, the distance
		from $x_n$ to the solution $P_Z(x_0)$ need not be decreasing, however, $x_{n}$ remain in a ball centred at $P_Z(x_0)$
		with  radius  which is a nonincreasing function of $n$
		(by \eqref{assertionCA_2} of the Proposition \ref{prop:convergence_analysis});
		\item new evaluation criteria allowing to compare algorithms (we use
		them to compare experimentally algorithms with different choices of $C_n$) (Proposition \ref{prop:inertial_property}).
	\end{itemize}
	
	We start with the following technical lemma.

	\begin{lemma}\label{lemma:ball}
		Let $U$ be a real Hilbert space and let $u_{1},u_{2},u_{3}\in U$,  $u_3\in H(u_1,u_2)$, $w=\frac{1}{2}(u_1+u_3)$, $r:=\|w-u_1\|$. Then
		\begin{enumerate}[(i)]
			\item\label{assertionBall1} $\|w-u_2\|\leq \frac{1}{2}\|u_1-u_3\|$,
			\item\label{assertionBall2} $\|u_2-u_3\|^2\leq b(u_2)$, where $b(\cdot):=4r^2-\|\cdot-u_1\|^2$.
			\item\label{assertionBall3} Moreover, if $u_4\in H$ and $u_2\in H(u_1,u_4)$, then $b(u_2)\leq b(u_4)$.
		\end{enumerate}

	\end{lemma}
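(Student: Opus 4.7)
The plan is to reduce all three assertions to a single polarization identity combined with the halfspace condition. Recall that $u_3 \in H(u_1,u_2)$ means $\langle u_3 - u_2 \mid u_1 - u_2\rangle \le 0$, and the key identity I will use repeatedly is
\begin{equation*}
\|u_1 - u_3\|^2 = \|u_1 - u_2\|^2 + \|u_3 - u_2\|^2 - 2\langle u_1 - u_2 \mid u_3 - u_2\rangle.
\end{equation*}
Geometrically, the hypothesis of the lemma says exactly that the angle of the triangle $u_1 u_2 u_3$ at the vertex $u_2$ is at least $\pi/2$, so (i) is a Hilbert-space version of Thales' theorem.

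For \eqref{assertionBall1}, I would write $w - u_2 = \tfrac{1}{2}\bigl((u_1-u_2) + (u_3-u_2)\bigr)$ and compute
\begin{equation*}
\|w - u_2\|^2 = \tfrac{1}{4}\|u_1-u_2\|^2 + \tfrac{1}{4}\|u_3-u_2\|^2 + \tfrac{1}{2}\langle u_1-u_2 \mid u_3-u_2\rangle,
\end{equation*}
while $r^2 = \tfrac{1}{4}\|u_1-u_3\|^2 = \tfrac{1}{4}\|u_1-u_2\|^2 + \tfrac{1}{4}\|u_3-u_2\|^2 - \tfrac{1}{2}\langle u_1-u_2 \mid u_3-u_2\rangle$. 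Subtracting gives $\|w-u_2\|^2 - r^2 = \langle u_1-u_2 \mid u_3-u_2\rangle \le 0$, which is precisely the assertion once one recalls $r = \tfrac{1}{2}\|u_1 - u_3\|$.

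For \eqref{assertionBall2}, I would rearrange the polarization identity above as
\begin{equation*}
\|u_2 - u_3\|^2 = \|u_1 - u_3\|^2 - \|u_1 - u_2\|^2 + 2\langle u_1 - u_2 \mid u_3 - u_2\rangle,
\end{equation*}
and use the halfspace condition together with $\|u_1 - u_3\|^2 = 4r^2$ to bound the right-hand side by $4r^2 - \|u_1-u_2\|^2 = b(u_2)$.

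For \eqref{assertionBall3}, I would apply the same polarization identity to the triple $(u_1,u_4,u_2)$, namely
\begin{equation*}
\|u_2 - u_1\|^2 = \|u_4 - u_1\|^2 + \|u_2 - u_4\|^2 - 2\langle u_4 - u_1 \mid u_2 - u_4\rangle,
\end{equation*}
and invoke the hypothesis $u_2 \in H(u_1,u_4)$, which gives $\langle u_2 - u_4 \mid u_1 - u_4\rangle \le 0$, equivalently $-2\langle u_4 - u_1 \mid u_2 - u_4\rangle \ge 0$. Hence $\|u_2-u_1\|^2 \ge \|u_4-u_1\|^2$, so subtracting both sides from $4r^2$ yields $b(u_2) \le b(u_4)$. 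There is no substantial obstacle here; the only point requiring care is keeping the sign conventions in the definition of $H(\cdot,\cdot)$ straight so that the inner product term carries the correct sign in each application of the polarization identity.
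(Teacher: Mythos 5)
Your argument is correct and follows essentially the same route as the paper's: both reduce all three assertions to the polarization identity combined with the sign condition $\langle u_3-u_2 \mid u_1-u_2\rangle\le 0$ (the paper proves (i) by contradiction whereas you argue directly, but the underlying computation is identical). One small slip in (iii): the displayed identity should read $-2\langle u_1-u_4\mid u_2-u_4\rangle$ rather than $-2\langle u_4-u_1\mid u_2-u_4\rangle$, and correspondingly the hypothesis $\langle u_2-u_4\mid u_1-u_4\rangle\le 0$ yields $-2\langle u_1-u_4\mid u_2-u_4\rangle\ge 0$; since your two sign errors cancel, the conclusion $\|u_2-u_1\|^2\ge\|u_4-u_1\|^2$, and hence $b(u_2)\le b(u_4)$, still stands.
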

	\begin{proof}\hfill\\[-0.1cm]
		\begin{enumerate}[(i)]
			\item We have
			\begin{align*}
				r&= \| w-u_1\|=\| \frac{1}{2}u_1+\frac{1}{2}u_3-u_1\|=\frac{1}{2}\|u_3-u_1\|\\
				& = \|\frac{1}{2}u_1+\frac{1}{2}u_3-u_3\|=\|w-u_3\|.
			\end{align*}	
			By contradiction, suppose $\|w-u_2\|> \frac{1}{2}\|u_1-u_3\|$. Since $u_3\in H(u_1,u_2)$ we have
			\begin{align*}
				\|u_3-u_2\|^2&+\|u_1-u_2\|^2\\
				&=\|u_3-u_2\|^2+\|u_1-u_2\|^2+2 \langle u_3-u_2 \ |\ u_2 - u_1 \rangle + 2 \langle u_3- u_2 \ |\ u_1 - u_2\rangle\\
				&= \|u_3- u_1 \|^2 +2 \langle u_3- u_2 \ |\ u_1 - u_2\rangle \leq \|u_3-u_1\|^2=4r^2.
			\end{align*}
			On the other hand
			\begin{align*}
				4r^2&=\|u_3-u_1\|^2\geq \|u_3-u_2\|^2+\|u_1-u_2\|^2\\
				&=\|u_3-w \|^2 - 2 \langle u_3-w \ |\ u_2 - w \rangle + \|u_2 - w \|^2\\
				&+\|u_1-w\|^2 -2 \langle u_1-w \ |\ u_2 - w \rangle + \|u_2-w\|^2\\
				& = 2r^2+2 \|u_2-w \|^2 -2 \langle u_3+u_1-2w \ |\ u_2 - w \rangle > 4r^2,
			\end{align*}
			a contradiction.
			\item We have
			\begin{align*}
				\|u_3-u_2\|^2 &= \langle u_3-u_2 \ |\ u_3-u_2 \rangle = \langle u_3-u_2 \ |\ u_1 - u_1 + u_3-u_2 \rangle\\
				& = \langle u_3-u_2 \ |\ u_1-u_2 \rangle + \langle u_3-u_2 \ |\ u_3- u_1 \rangle \\
				& \leq \langle u_3-u_2 \ |\ u_3- u_1 \rangle = \langle u_3-u_2-u_1+ u_1 \ |\ u_3-u_1 \rangle\\
				&= \langle u_3-u_1 \ |\ u_3-u_1 \rangle + \langle u_1- u_2 \ |\ u_3- u_1 \rangle\\
				& = 4r^2 + \langle u_1- u_2 \ |\ u_2 - u_2 + u_3- u_1 \rangle\\
				& = 4r^2 +  \langle u_1 - u_2 \ |\ u_2-u_1 \rangle + \langle u_1-u_2 \ |\ u_3-u_2 \rangle\\ &\leq 4r^2 - \|u_1-u_2\|^2.
			\end{align*}
			\item The assertion \eqref{assertionBall3} stems from the fact that $u_2\in H(u_1,u_4)$ implies $\|u_1-u_2\|^2\geq \|u_1-u_4\|^2$ and 
			\begin{equation*}
				\|u_3-u_2\|^2\leq \| u_1-u_3\|^2 - \|u_1-x_{n}\|^2\leq \| u_1-u_3\|^2 - \|u_1-u_4\|^2.
			\end{equation*}
		\end{enumerate}\mbox{\ }
	\end{proof}


	We show that all the points $x_{n}$, $n\in\mathbb{N}$, generated by Iterative Scheme \ref{alg:generic_best} are contained in the ball centred at $w:=\frac{1}{2}(x_0+\bar{x})$
	with radius $r:=\|w-x_0\|=\frac{1}{2}\text{dist\,}(x_{0},Z)$ and the distance from $x_n$ to the solution $\bar{x}$ is bounded from above by a nonincreasing sequence.

	\begin{proposition}\label{prop:convergence_analysis}
		Let $x_0\in H\times G$. Any sequence $\{x_n\}_{n\in \mathbb{N}}$ generated by Iterative Scheme \ref{alg:generic_best} satisfies the following:
		\begin{enumerate}[(i)]
			\item \label{assertionCA_1}
			$\|w-x_n\|\leq \frac{1}{2}\|x_0-\bar{x}\|$, $n\in \mathbb{N}$.
			\item \label{assertionCA_2} $\|x_n-\bar{x}\|^2\leq b_n
			$, where $b_n:=4r^2-\|x_n-x_0\|^2\geq 0$, $n\in \mathbb{N}$.
			\item \label{assertionCA_3} Moreover, if $x_n\in H(x_{0},x_{n-1})$ for all $n\geq 1$, the sequence $\{b_n\}_{n\in \mathbb{N}}$ is nonincreasing. If for some $n\ge 1$ we have $x_{n-1}\neq x_{n}$, then
			\begin{align}
				\begin{aligned}
					&\|\bar{x}-x_n\|^2< \| x_0-\bar{x}\|^2 - \|x_0-x_{n-1}\|^2,  \label{eq:}\\
					& b_n <  b_{n-1}.
				\end{aligned}
			\end{align}
		\end{enumerate}
		
	\end{proposition}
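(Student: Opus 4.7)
The plan is to instantiate Lemma \ref{lemma:ball} with the correspondence $u_1 \leftrightarrow x_0$, $u_2 \leftrightarrow x_n$, $u_3 \leftrightarrow \bar{x}$, and (for assertion (iii)) $u_4 \leftrightarrow x_{n-1}$. Under this matching, the midpoint and radius of the lemma coincide with $w = \tfrac{1}{2}(x_0+\bar{x})$ and $r = \|w-x_0\| = \tfrac{1}{2}\|x_0-\bar{x}\|$ of the proposition, and the function $b(\cdot) = 4r^2 - \|\cdot-u_1\|^2$ evaluated at $x_n$ reproduces $b_n$. The only hypothesis of the lemma that must be verified is $u_3 \in H(u_1,u_2)$, i.e.\ $\bar{x} \in H(x_0,x_n)$. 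Since $\bar{x} = P_Z(x_0) \in Z$, this is immediate from assertion \ref{assertion_H1} of Proposition \ref{prop:Haugazeau}, which asserts $Z \subset H(x_0,x_n)\cap C_n$ for every $n\in \mathbb{N}$.

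With the hypothesis in hand, assertion (i) follows directly from Lemma \ref{lemma:ball}(\ref{assertionBall1}) and assertion (ii) from Lemma \ref{lemma:ball}(\ref{assertionBall2}); nonnegativity of $b_n$ then drops out of (ii) because $\|x_n-\bar{x}\|^2 \geq 0$. For the monotonicity part of (iii), I would first observe that $x_n \in H(x_0,x_{n-1})$ holds automatically by construction: in the Haugazeau step one has $x_n = P_{H(x_0,x_{n-1})\cap C_{n-1}}(x_0)$, so $x_n$ belongs in particular to $H(x_0,x_{n-1})$. Applying Lemma \ref{lemma:ball}(\ref{assertionBall3}) with $u_4 = x_{n-1}$ then yields $b_n = b(x_n) \leq b(x_{n-1}) = b_{n-1}$.

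To upgrade to the strict inequality in \eqref{eq:} when $x_{n-1}\neq x_n$, I would not invoke Lemma \ref{lemma:ball}(\ref{assertionBall3}) directly but instead use the defining inequality $\langle x_n - x_{n-1} \mid x_0 - x_{n-1}\rangle \leq 0$ of $H(x_0,x_{n-1})$ together with the Pythagorean-style expansion
\begin{equation*}
\|x_n-x_0\|^2 = \|x_n-x_{n-1}\|^2 + 2\langle x_n-x_{n-1} \mid x_{n-1}-x_0\rangle + \|x_{n-1}-x_0\|^2 \geq \|x_{n-1}-x_0\|^2 + \|x_n-x_{n-1}\|^2.
\end{equation*}
Combining with assertion (ii) then gives $\|\bar{x}-x_n\|^2 \leq \|x_0-\bar{x}\|^2 - \|x_{n-1}-x_0\|^2 - \|x_n-x_{n-1}\|^2$, and the hypothesis $x_{n-1}\neq x_n$ makes the last term strictly positive, which yields both strict inequalities of \eqref{eq:} simultaneously. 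The main (mild) obstacle is precisely this last step: Lemma \ref{lemma:ball}(\ref{assertionBall3}) only delivers a weak inequality, so one needs the explicit expansion above to extract the gap $\|x_n-x_{n-1}\|^2$ responsible for strictness. Beyond that, the proof is essentially bookkeeping between the notation of Lemma \ref{lemma:ball} and that of Proposition \ref{prop:Haugazeau}.
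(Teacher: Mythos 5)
Your proof is correct and follows essentially the same route as the paper: instantiate Lemma \ref{lemma:ball} with $u_1=x_0$, $u_2=x_n$, $u_3=\bar{x}$ (and $u_4=x_{n-1}$ for the monotonicity), using $\bar{x}\in Z\subset H(x_0,x_n)$ from Proposition \ref{prop:Haugazeau}. The only difference is that for strictness the paper tersely cites assertion \ref{assertion_H2} of Proposition \ref{prop:Haugazeau}, whereas you write out the underlying expansion $\|x_n-x_0\|^2\geq\|x_{n-1}-x_0\|^2+\|x_n-x_{n-1}\|^2$ explicitly — which is exactly the argument hiding behind that citation.
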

	\begin{proof}
		Let $n\in \mathbb{N}$. We have $\bar{x}\in H(x_0,x_n)$.  We obtain \eqref{assertionCA_1} and \eqref{assertionCA_2} by applying Lemma \ref{lemma:ball}  with $u_1=x_0$, $u_2=x_n$ and $u_3=\bar{x}$.
		\par The assertion \eqref{assertionCA_3} follows from \eqref{assertionBall3} of Lemma \ref{lemma:ball} with $u_1=x_0$, $u_2=x_n$, $u_3=\bar{x}$ and $u_4=x_{n-1}$.
		Moreover, if for some $n\geq 1$ we have $x_{n-1}\neq x_{n}$, then $\|x_0-x_n\|^2 > \|x_0-x_{n-1}\|^2$, which follows from \ref{assertion_H2} of Proposition \ref{prop:Haugazeau}.
	\end{proof}
	
	Let us note that Iterative Scheme \ref{alg:generic_best} is sufficiently general to encompass algorithm 2.1 of \cite{Alotaibi_2015_best} as well as any algorithm 
	with  memory introduced by $C_n$ satisfying the requirements of Proposition \ref{theorem:best_projection}.
	In consequence, Proposition \ref{prop:convergence_analysis} provides properties of sequences $\{x_n\}_{n\in \mathbb{N}}$ constructed in these algorithms. 
	
	\begin{corollary}
		For any $x_0\in H\times G$ and 
		$x_n,\ x_{n+1/2}$, $n\geq 1$ generated by Iterative Scheme \ref{alg:generic_best}  we have
		\begin{equation*}
			\|x_{n+1/2}-\bar{x}\|^2\leq \|x_0-\bar{x}\|^2-\|x_n-x_0\|^2-\|x_{n+1/2}-x_n\|^2
		\end{equation*}
		for any $\bar{x}\in H(x_0,x_n)\cap H(x_n,x_{n+1/2})$.
	\end{corollary}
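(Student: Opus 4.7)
The plan is to chain together two applications of Lemma \ref{lemma:ball}\eqref{assertionBall2}, each capturing one of the two half-space memberships satisfied by $\bar{x}$. Lemma \ref{lemma:ball}\eqref{assertionBall2} says exactly that if $u_{3}\in H(u_{1},u_{2})$, then $\|u_{2}-u_{3}\|^{2}\le \|u_{1}-u_{3}\|^{2}-\|u_{1}-u_{2}\|^{2}$ (since $4r^{2}=\|u_{1}-u_{3}\|^{2}$), which is a Pythagorean-type bound whenever the third point lies in the half-space defined by the first two. The hypothesis $\bar{x}\in H(x_{0},x_{n})\cap H(x_{n},x_{n+1/2})$ is precisely what is needed to apply this bound twice, once with respect to the pair $(x_{0},x_{n})$ and once with respect to the pair $(x_{n},x_{n+1/2})$.

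Concretely, I would first apply Lemma \ref{lemma:ball}\eqref{assertionBall2} with $u_{1}=x_{n}$, $u_{2}=x_{n+1/2}$, $u_{3}=\bar{x}$, which is valid because $\bar{x}\in H(x_{n},x_{n+1/2})$. This yields
\begin{equation*}
\|x_{n+1/2}-\bar{x}\|^{2}\le \|x_{n}-\bar{x}\|^{2}-\|x_{n}-x_{n+1/2}\|^{2}.
\end{equation*}
Next I would apply Lemma \ref{lemma:ball}\eqref{assertionBall2} with $u_{1}=x_{0}$, $u_{2}=x_{n}$, $u_{3}=\bar{x}$, which is valid because $\bar{x}\in H(x_{0},x_{n})$, giving
\begin{equation*}
\|x_{n}-\bar{x}\|^{2}\le \|x_{0}-\bar{x}\|^{2}-\|x_{0}-x_{n}\|^{2}.
\end{equation*}
Substituting the second inequality into the first produces the desired estimate.

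There is really no substantial obstacle: the statement is a transparent corollary of Lemma \ref{lemma:ball}\eqref{assertionBall2} applied along the telescoping chain $x_{0}\to x_{n}\to x_{n+1/2}$, and the only thing to check is that the membership hypotheses required by the lemma match the hypothesis $\bar{x}\in H(x_{0},x_{n})\cap H(x_{n},x_{n+1/2})$ in the statement of the corollary. Note in particular that we do not need $\bar{x}$ to be the strong limit of the algorithm (as in Proposition \ref{prop:convergence_analysis}); the argument only uses the two half-space memberships, so the corollary holds for every $\bar{x}$ in the stated intersection.
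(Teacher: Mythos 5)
Your proposal is correct and is essentially identical to the paper's own proof: the paper likewise applies Lemma \ref{lemma:ball}\eqref{assertionBall2} once with $u_1=x_0$, $u_2=x_n$, $u_3=\bar{x}$ and once with $u_1=x_n$, $u_2=x_{n+1/2}$, $u_3=\bar{x}$, then combines the two inequalities. The only difference is the order in which the two applications are presented, which is immaterial.
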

	\begin{proof}
		Let $n\in \mathbb{N}$. Applying (ii) of Lemma \ref{lemma:ball} to $u_1=x_0$, $u_2=x_n$, $u_3=\bar{x}$ 
		\begin{equation}\label{ineq:1}
			\|x_n-\bar{x}\|^2\leq 
			\|x_0-\bar{x}\|^2-\|x_n-x_0\|^2.
		\end{equation}
		Applying again \eqref{assertionBall2} of Lemma \ref{lemma:ball} to $u_1=x_n,\ u_2=x_{n+1/2}, u_3=\bar{x}$ we obtain
		\begin{equation}\label{ineq:2}
			\|x_{n+1/2}-\bar{x}\|^2\leq \|x_n-\bar{x}\|^2-\|x_{n+1/2}-x_n\|^2.
		\end{equation}
		In consequence, we have $\|x_{n+1/2}-\bar{x}\|\leq \|x_n-\bar{x}\|$.	Combining \eqref{ineq:1} and \eqref{ineq:2} we obtain
		\begin{equation*}
			\|x_{n+1/2}-\bar{x}\|^2\leq \|x_n-\bar{x}\|^2-\|x_{n+1/2}-x_n\|^2\leq \|x_0-\bar{x}\|^2-\|x_n-x_0\|^2 -\|x_{n+1/2}-x_n\|^2.
		\end{equation*}
		$\;$			
	\end{proof}
	
	To prove Proposition \ref{prop:inertial_property}, which is our main result in this section we need the following Lemma.
	\begin{lemma}\label{lemma:inertial_property}
		Let $U$ be a real Hilbert space and let $D\subset U$ be a nonempty subset of $U$. Let $u_{1},u_{2},u_{3},u_{4}\in U$ and  $u_3\in H(u_1,u_2) \cap H(u_2,u_4) \cap D$, $w=\frac{1}{2}(u_1+u_3)$, $r:=\|w-u_1\|$. 
		
		Let $\bar{q}\in H(u_1,u_2)\cap H(u_2,u_4)\cap D$. Then $\bar{q} \in H(u_1,q)$, where  $q=Q(u_1,u_2,u_4):=P_{H(u_1,u_2)\cap H(u_2,u_4)}(u_1)$ and
		\begin{align*}
			& \|u_1-\bar{q}\|^2\geq \|u_1-q\|^2+\|\bar{q}-q\|^2,\\
			& \|u_3-\bar{q} \|^2 \leq 4r^2 - \|u_1-\bar{q}\|^2 \leq 4r^2- \|u_1-q\|^2-\|\bar{q}-q\|^2.
		\end{align*}
		
		\begin{proof}
			It is immediate that $\bar{q}\in H(u_1,q)$. Thus
			\begin{displaymath}
				\|u_1-\bar{q}\|^2=\|u_1-q\|^2 + 2\langle u_1-q \ |\ q - \bar{q} \rangle + \|\bar{q}-q\|^2\geq \|u_1-q\|^2+\|\bar{q}-q\|^2 .
			\end{displaymath}
			By Lemma \ref{lemma:ball}, since $u_3\in H(u_1,\bar{q})$,  we have $\|u_3-\bar{q}\|\leq 4r^2-\|u_1-\bar{q}\|^2$ which completes the proof.
		\end{proof}
		
	\end{lemma}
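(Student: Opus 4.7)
The plan is to verify the three conclusions in turn, leveraging the variational characterization of the metric projection and then invoking Lemma \ref{lemma:ball}.

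The membership $\bar q\in H(u_1,q)$ is immediate from the obtuse-angle characterization of projections: the set $K:=H(u_1,u_2)\cap H(u_2,u_4)$ is closed and convex as an intersection of two closed halfspaces, $q=P_K(u_1)$, and $\bar q\in K$ by hypothesis, so $\langle u_1-q \ |\ \bar q-q\rangle\le 0$, which is exactly the stated inclusion. For the Pythagorean-type inequality $\|u_1-\bar q\|^2\ge \|u_1-q\|^2+\|\bar q-q\|^2$, I would decompose $u_1-\bar q=(u_1-q)+(q-\bar q)$ and expand
\begin{equation*}
\|u_1-\bar q\|^2 \;=\; \|u_1-q\|^2+\|\bar q-q\|^2+2\langle u_1-q \ |\ q-\bar q\rangle,
\end{equation*}
then observe that the cross term is nonnegative by the first inclusion just established.

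For the chain $\|u_3-\bar q\|^2\le 4r^2-\|u_1-\bar q\|^2\le 4r^2-\|u_1-q\|^2-\|\bar q-q\|^2$, the plan is to apply Lemma \ref{lemma:ball}(ii) to the triple $(u_1,\bar q,u_3)$ (with $\bar q$ playing the role of $u_2$). By definition of $r$ we have $4r^2=\|u_1-u_3\|^2$, so the lemma yields $\|u_3-\bar q\|^2\le 4r^2-\|u_1-\bar q\|^2$; the second inequality in the chain then follows by substituting the Pythagorean bound derived in the previous paragraph.

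The main obstacle is verifying the hypothesis required for that application of Lemma \ref{lemma:ball}(ii), namely $u_3\in H(u_1,\bar q)$, equivalent to $\langle u_3-\bar q \ |\ u_1-\bar q\rangle\le 0$. This is not a direct consequence of the joint memberships $u_3,\bar q\in H(u_1,u_2)\cap H(u_2,u_4)\cap D$ alone and must be extracted from the specific role of $u_3$ and $\bar q$ as projection-type points lying in the common convex set $K\cap D$; this is the place in the argument where care is needed before Lemma \ref{lemma:ball}(ii) can be invoked to close the chain.
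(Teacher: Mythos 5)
Your argument follows the paper's proof step for step: the membership $\bar q\in H(u_1,q)$ and the Pythagorean inequality are obtained exactly as in the paper, and the final chain is meant to be closed by applying Lemma \ref{lemma:ball}(ii) to the triple $(u_1,\bar q,u_3)$. The one step you stop short of --- verifying $u_3\in H(u_1,\bar q)$ --- is precisely the step the paper asserts without justification, and your hesitation is warranted: this inclusion does \emph{not} follow from the hypotheses as stated. Take $U=\mathbb{R}^2$, $u_1=(0,1)$, $u_2=u_4=(0,0)$, $D=U$, so that $H(u_1,u_2)\cap H(u_2,u_4)\cap D=\{(x,y): y\le 0\}$; with $u_3=(0,0)$ and $\bar q=(10,0)$ one has $\langle u_3-\bar q\mid u_1-\bar q\rangle=100>0$, and since $4r^2=\|u_1-u_3\|^2=1$ while $\|u_1-\bar q\|^2=101$, the asserted inequality $\|u_3-\bar q\|^2\le 4r^2-\|u_1-\bar q\|^2$ would read $100\le -100$. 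So the lemma as literally stated is false, and no amount of care can extract the missing inclusion from the given hypotheses alone.

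What is missing is the additional assumption, implicit in the way the lemma is used in Proposition \ref{prop:inertial_property}, that $\bar q$ is the \emph{projection} of $u_1$ onto the (closed convex) set $H(u_1,u_2)\cap H(u_2,u_4)\cap D$, rather than an arbitrary element of it; in the application one has $\bar q=P_{H(x_0,x_{n-1})\cap C_{n-1}}(x_0)$ and $u_3=\bar x\in Z$ lies in that set. Under this assumption the same obtuse-angle characterization you already invoked for $q$ gives $\langle u_3-\bar q\mid u_1-\bar q\rangle\le 0$ for every $u_3$ in the set, i.e.\ $u_3\in H(u_1,\bar q)$, and Lemma \ref{lemma:ball}(ii) applied to $(u_1,\bar q,u_3)$ yields $\|u_3-\bar q\|^2\le 4r^2-\|u_1-\bar q\|^2$; combining this with your Pythagorean bound finishes the chain. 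So your plan is the right one, and the correct resolution of the obstacle you identified is to strengthen the hypothesis on $\bar q$ to make it a projection onto a convex set containing $u_3$ --- which is also the repair the paper's own proof needs.
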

	
	To compare best approximation algorithms as defined in \cite{Alotaibi_2015_best} with the  Iterative Scheme \ref{alg:generic_best} with memory we concentrate on single step gains. To this end let us denote $q_n:=P_{D(n)}(x_0)$, $x_n:=P_{D(n-1,n)}(x_0)$,
	where $D(n)=H(x_0,x_{n-1})\cap H(x_{n-1},x_{n-1+1/2})$ as e.g in \cite{Alotaibi_2015_best}  and $D(n-1,n)=H(x_0,x_{n-1})\cap H(x_{n-1},x_{n-1+1/2})\cap C_{n-1}$
	with $C_{n-1}$ as in Proposition \ref{prop:setdefinitions}.

	\begin{proposition} \label{prop:inertial_property} [\textit{Attraction property}]
		Sequences $\{x_n\}_{n\in \mathbb{N}}$,
		$\{x_{n+1/2}\}_{n\in \mathbb{N}}$ generated by Iterative Scheme  \ref{alg:generic_best} satisfy the following:
		\begin{enumerate}[(i)]
			\item $\|x_0-x_{n}\|^2\geq \|x_0-q_n\|^2+\|x_{n}-q_n\|^2$,
			\item\label{prop:inertial_property:item2} $\|\bar{x}-x_{n}\|^2 \leq \|x_0-\bar{x}\|^2-\|x_0 - x_n \|^2  \leq \|x_0-\bar{x}\|^2-\|x_0 - q_n \|^2 - \|x_{n}-q_n\|^2$,
		\end{enumerate}
		where $q_n:=P_{H(x_0,x_{n-1})\cap H(x_{n-1},x_{n-1+1/2})}(x_0)$ and $\bar{x}=P_Z(x_0)$.
		
	\end{proposition}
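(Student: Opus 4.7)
The plan is to deduce both assertions directly from Lemma \ref{lemma:inertial_property} via the identifications
$u_1 := x_0$, $u_2 := x_{n-1}$, $u_4 := x_{n-1+1/2}$, $u_3 := \bar{x} = P_{\solset}(x_0)$, $\bar{q} := x_n$, $q := q_n$, and $D := \hsubset_{n-1}$. Under these identifications one has $H(u_1,u_2)\cap H(u_2,u_4) = H(x_0,x_{n-1})\cap H(x_{n-1},x_{n-1+1/2})$, so the auxiliary point $q = Q(u_1,u_2,u_4)$ of the lemma coincides with $q_n$ defined in the statement, and with $w = \tfrac{1}{2}(x_0+\bar{x})$ one has $4r^2 = \|x_0-\bar{x}\|^2$.

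The main preparatory task is to verify $x_n,\bar{x} \in H(x_0,x_{n-1})\cap H(x_{n-1},x_{n-1+1/2}) \cap \hsubset_{n-1}$. For $\bar{q}=x_n$ this is immediate from the Haugazeau step: $x_n = P_{H(x_0,x_{n-1})\cap \hsubset_{n-1}}(x_0)$ lies in $H(x_0,x_{n-1})\cap \hsubset_{n-1}$, and the requirement $\hsubset_{n-1} \subset H(x_{n-1},x_{n-1+1/2})$ that Iterative Scheme \ref{alg:generic_best} imposes on $\hsubset_{n-1}$ gives the remaining inclusion. For $u_3 = \bar{x}$ I would combine the inductive inclusion $\solset \subset H(x_0,x_{n-1})$, which is assertion \ref{assertion_H1} of Proposition \ref{prop:Haugazeau}, with the structural condition $\solset \subset \hsubset_{n-1} \subset H(x_{n-1},x_{n-1+1/2})$ built into Iterative Scheme \ref{alg:generic_best}; the second of these inclusions then gives $\bar{x}\in \hsubset_{n-1}\cap H(x_{n-1},x_{n-1+1/2})$ since $\bar{x}\in \solset$.

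Once the hypotheses are verified, assertion (i) is exactly the first conclusion of Lemma \ref{lemma:inertial_property}, namely $\|u_1-\bar{q}\|^2 \geq \|u_1-q\|^2 + \|\bar{q}-q\|^2$, read in the present notation. For the double inequality (ii), I would substitute into the second conclusion, $\|u_3-\bar{q}\|^2 \leq 4r^2 - \|u_1-\bar{q}\|^2 \leq 4r^2 - \|u_1-q\|^2 - \|\bar{q}-q\|^2$, and use the evaluation $4r^2 = \|x_0-\bar{x}\|^2$ noted above; this gives exactly the chain claimed in (ii).

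The only non-routine aspect is the bookkeeping for the three set memberships of $\bar{x}$, and this relies entirely on the standing assumption $\solset \subset \hsubset_n \subset H(x_n,x_{n+1/2})$ of Iterative Scheme \ref{alg:generic_best} together with Proposition \ref{prop:Haugazeau}(\ref{assertion_H1}); no further projection-geometry argument is needed, since all such content is already packaged in Lemma \ref{lemma:ball} and Lemma \ref{lemma:inertial_property}. In particular, there is no genuine analytic obstacle, and the proof reduces to a careful translation of the abstract lemma into the notation of the scheme.
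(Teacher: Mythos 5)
Your proposal is correct and follows essentially the same route as the paper: the paper's proof is exactly the one-line application of Lemma \ref{lemma:inertial_property} with $u_1=x_0$, $u_2=x_{n-1}$, $u_3=\bar{x}$, $u_4=x_{n-1+1/2}$, $\bar{q}=x_n=P_{H(x_0,x_{n-1})\cap \hsubset_{n-1}}(x_0)$ and $D=\hsubset_{n-1}$. Your additional verification that $x_n$ and $\bar{x}$ indeed lie in $H(x_0,x_{n-1})\cap H(x_{n-1},x_{n-1+1/2})\cap \hsubset_{n-1}$ (via the Haugazeau step, the standing inclusion $\solset\subset\hsubset_{n-1}\subset H(x_{n-1},x_{n-1+1/2})$, and Proposition \ref{prop:Haugazeau}(\ref{assertion_H1})) is a correct filling-in of bookkeeping the paper leaves implicit.
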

	
	\begin{proof}
		
		The proof follows directly from Lemma \ref{lemma:inertial_property} with $u_1=x_0$, $u_2=x_{n-1}$, $u_3=\bar{x}$, $u_4=x_{n-1+1/2}$, $\bar{q}=P_{H(x_0,x_{n-1})\cap C_{n-1}}(x_0)$ and $D=C_{n-1}$.
	\end{proof}
	Let us note that, in the case when $x_n=q_n$, by \eqref{prop:inertial_property:item2}, we have $\|\bar{x}-q_{n}\|^2 \leq \|x_0-\bar{x}\|^2-\|x_0 - q_n \|^2 $. Hence, in the Iterative Scheme \ref{alg:generic_best} we are interested in choices of $C_n$  which make the difference $x_n-q_n$ large. Note that in case of $C_{n-1}=H(x_{n-1}, x_{n-1+1/2})$,  $\|x_{n}-q_n\|^2=0$. For other choices of $C_{n-1}$ the worst case leads to $\|x_{n}-q_n\|^2=0$, however, we can expect some improvement. Consequently,  the proposed \textit{attraction property} may serve as an evaluation criterion for comparing various versions of Iterative Scheme \ref{alg:generic_best}.
	
	
	\section{Proximal algorithms}\label{sec:the_algorithm}
	
	\par Let $H$ and $G$ be real Hilbert spaces, let $f:\ H\rightarrow (-\infty,+\infty]$ and $g:\ G\rightarrow (-\infty,+\infty]$ be proper lower semicontinuous convex functions and let $L:\ H\rightarrow G$ be a bounded linear operator. Iterative Scheme \ref{alg:proximal_best} defined bellow is an application of Iterative Scheme \ref{alg:generic_best} to optimization problem  \eqref{prob_main}-\eqref{optimization_funD}, i.e. we consider the pair of problems,
	\begin{equation}\label{prob:1}
		\min_{p\in H} F_P(p):=f(p)+g(Lp)
	\end{equation}
	and the dual problem to (\ref{prob:1}), 
	\begin{equation}\label{prob:2}
		\min_{v^*\in G}\ F_D(v^*): = f^*(-L^*v^*)+g^*(v^*).
	\end{equation}
	If \eqref{prob:1} has a solution $\bar{p}\in H$ and the regularity condition holds, e.g.
	\begin{displaymath}
		0 \in \text{sqri} (\text{dom} \; g-L(\text{dom} \;f)),
	\end{displaymath}
	where $\text{dom}$ denotes the effective domain of a function and for any convex closed set $S$
	\begin{displaymath}
		\text{sqri}S:= \{x\in S\ |\ \bigcup_{\lambda>0} \lambda (S-x)\ \text{is a closed linear subspace of}\ H \},
	\end{displaymath}
	there exists $\bar{v}^*\in G$ solving \eqref{prob:2} and
	\begin{equation}\label{set:Zalgorithm}
		(\bar{p},\bar{v}^*)\in Z=\{ (p,v^*)\in H\times G\ |\ -Lv^*\in \partial f(x)\ \text{and}\ v^*\in \partial g(Lx)  \}.
	\end{equation}
	Conversely, if $(\bar{p},\bar{v}^*)\in Z$, then $\bar{p}$ solves \eqref{prob:1} and $\bar{v}^*$ solves \eqref{prob:2}.
	The set $Z$ defined by \eqref{set:Zalgorithm} is of the form \eqref{set:Z}, when $A=\partial f$ and $B=\partial g$. 
	
	Recall that for any $x\in H$ and any proper convex and lower semi-continuous function $f:H \rightarrow \mathbb{R}\cup\{+\infty\}$ the \textit{proximity operator} $\prox_f(x)$ is defined as the unique solution to the optimization problem
	\begin{displaymath}
		\min_{y\in H} \left(f(y) + \frac{1}{2} \|x-y\|^2  \right).
	\end{displaymath}
	\begin{theorem}\cite[Example 23.3]{Bauschke_2011_convex_analysis}
		Let $f: H \rightarrow \mathbb{R}\cup \{+\infty\}$ be a proper convex lower semi-continuous function, $x\in H$ and $\gamma>0$. Then
		\begin{displaymath}
			J_{\gamma \partial f}(x)=\prox_{\gamma f} (x).
		\end{displaymath}
	\end{theorem}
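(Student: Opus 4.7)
The plan is to verify the identity by the direct optimality-condition argument, using Fermat's rule and the subdifferential sum rule. By definition, $\prox_{\gamma f}(x)$ is the unique point $y \in H$ minimizing the strictly convex, coercive, proper, lower semi-continuous functional $\Phi(y) := \gamma f(y) + \frac{1}{2}\|x-y\|^2$; existence and uniqueness of the minimizer are what make $\prox_{\gamma f}(x)$ well-defined in the first place.

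First I would apply Fermat's rule: $y$ minimizes $\Phi$ if and only if $0 \in \partial \Phi(y)$. Next, using the subdifferential sum rule (which applies here because the quadratic term is finite and Fr\'echet differentiable everywhere, so the standard Moreau--Rockafellar condition is trivially satisfied), one has
\begin{equation*}
\partial \Phi(y) \;=\; \gamma \, \partial f(y) + (y-x).
\end{equation*}
Therefore $0 \in \partial \Phi(y)$ is equivalent to $x - y \in \gamma \, \partial f(y)$, i.e.\ $x \in y + \gamma\,\partial f(y) = (\mathrm{Id} + \gamma \partial f)(y)$.

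Then I would invoke the fact that $\partial f$ is maximally monotone (by Rockafellar's theorem, since $f$ is proper, convex, and lower semi-continuous), so that the resolvent $J_{\gamma \partial f} = (\mathrm{Id}+\gamma \partial f)^{-1}$ is a single-valued operator defined on all of $H$. Consequently, the relation $x \in (\mathrm{Id}+\gamma \partial f)(y)$ is equivalent to $y = J_{\gamma \partial f}(x)$. Combining with the uniqueness of the $\prox$ minimizer yields the claimed equality $J_{\gamma \partial f}(x) = \prox_{\gamma f}(x)$.

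I do not expect any serious obstacle: the argument is a textbook chain of equivalences. The only subtle point is the subdifferential sum rule, but for the sum of $\gamma f$ with the everywhere-finite smooth function $\tfrac12\|x-\cdot\|^2$ this reduces to the elementary case where the second summand's subdifferential is just its gradient $\{y-x\}$, so no regularity hypothesis on $f$ is needed beyond what is already assumed.
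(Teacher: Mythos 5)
Your argument is correct and is exactly the standard derivation: Fermat's rule applied to $y\mapsto \gamma f(y)+\tfrac12\|x-y\|^2$, the sum rule (trivially valid since the quadratic term is everywhere finite and differentiable), and the single-valuedness of the resolvent of the maximally monotone operator $\partial f$. The paper does not prove this statement itself but cites it from Bauschke--Combettes, where the proof proceeds along the same chain of equivalences, so there is nothing to add.
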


	\begin{algorithm}[ht]
		\caption{Proximal primal-dual best approximation iterative scheme \label{alg:proximal_best}}
		\begin{algorithmic}
			\STATE{Choose an initial point $x_0=(p_0,v_0^*)\in H\times G$ and $\varepsilon>0$} 
			\STATE{Choose sequences of parameters $\{\lambda_n\}_{n\ge 0}\in (0,1]$ and $\{\gamma_n\}_{n\ge 0},\{\mu_n\}_{n\ge 0}\in [\varepsilon,1/\varepsilon]$}
			\FOR{$n=0,1\dots$}
			\STATE{$a_n = \text{Prox}_{\gamma_n f} (p_n - \gamma_n L^* v_n^*)$}
			\STATE{$b_n = \text{Prox}_{\mu_n g} (L p_n + \mu_n v_n^*)$}
			\STATE{$a_n^* = \gamma_n^{-1}(p_n-a_n) - L^* v_n^* $}
			\STATE{$b_n^* = \mu_n^{-1} (Lp_n-b_n) +  v_n^*  $}
			
			\STATE{$s_n^* = (a_n^* + L^*b_n^*, b_n - La_n)$}
			\STATE{$\eta_n =\left\langle a_n \mid a_n^* \right\rangle + \left\langle b_n \mid b_n^* \right\rangle$}
			\STATE{$H_n = \left\{x \in H\times G \mid \left\langle  x \mid s_{a_n,b_n}^*\right\rangle \leq \eta_{a_n,b_n}  \right\}$}
			\IF{$\|s_n^*\|=0$}
			\STATE{$\bar{x}=x_n$, $\bar{v}^*=v_n^*$}
			\STATE{Terminate}
			\ELSE
			\STATE{\textbf{Fej{\'e}rian step}}
			\STATE{$x_{n+1/2}=x_n+\lambda_n(P_{\secondhsubset_n}(x_{n})-x_n)$}
			\STATE{\textbf{Haugazeau step}}
			\STATE{Choose $\hsubset_n$ closed convex such that $Z\subset C_n \subset H(x_n,x_{n+1/2})$
			} 				
			\STATE{$x_{n+1}=P_{H(x_0, x_n) \cap \hsubset_n}(x_0)$}
			\ENDIF
			\ENDFOR
			\RETURN
		\end{algorithmic}
	\end{algorithm}
	Convergence properties of Iterative Scheme \ref{alg:proximal_best} are summarized in Proposition \ref{theorem:best_projection}.
	
	\subsection{Generalization to finite number of functions}
	Let $M$ and $K$ be natural numbers. Let $E=\bigoplus_{i=1}^M H_i \times \bigoplus_{k=1}^K G_k$, where $H_i$, $G_k$ are real Hilbert spaces, $i=1,\dots,M$, $k=1,\dots,K$. Let $f_i:\ H_i \rightarrow \mathbb{R}\cup \{+\infty \} $ 
	and $g_k:\ G_i \rightarrow \mathbb{R}\cup \{+\infty \} $ 
	be proper lower  semicontinuous convex functions and $L_{ik}:\ H_i \rightarrow G_k$ be  bounded linear operators, $i=1,\dots,M$, $k=1,\dots,K$. Consider the primal problem
	\begin{equation}\label{prob:1multi}
		\min_{p_1\in H_1,\dots,p_M \in H_M} \quad \sum_{i=1}^{M} f_i(p_i) + \sum_{k=1}^{K} g_k\left( \sum_{i=1}^{M} L_{ik}p_i\right).
	\end{equation}

	Problem formulation \eqref{prob:1multi} is general enough to cover problem arising in diverse applications including signal and image reconstruction, compressed sensing and machine learning \cite
	{Hendrich_2014_phd}.
	The dual problem to  \eqref{prob:1multi} is
	\begin{equation}\label{prob:2multi}
		\min_{v_1^*\in G_1,\dots,v_K^* \in G_K} \quad \sum_{i=1}^{M} f_i^*\left(-\sum_{k=1}^K L_{ki}^* v_k^* \right) + \sum_{k=1}^{K} g_k^*(v_k^*).
	\end{equation}
	Assume that
	\begin{displaymath}
		(\forall i \in \{1,\dots,M\}) \quad 0 \in \text{ran} \left(\partial f_i + \sum_{k=1}^{K} L_{ki}^* \circ \partial g_k \circ L_{ik}\right),
	\end{displaymath}
	where $\text{ran} D$ denotes the range of an operator $D$.
	
	Then the set
	\begin{align}
		\begin{aligned}	
			\label{set:KTmulti}
			Z:=&\bigg\{ (p_1,\dots,p_M,v_1^*,\dots,v_K^*)\in E \ |\ 
			-\sum_{k=1}^{K} L_{ki}^* v_k^* \in \partial f_i(p_i), 
			\sum_{i=1}^{M} L_{ik} p_i \in \partial g_k^* (v_k^*), \\
			& i=1\dots,M,\ k=1,\dots,K  \bigg\}
		\end{aligned}
	\end{align}
	is nonempyty and if $(\bar{p}_1,\dots,\bar{p}_M,\bar{v}_1^*,\dots,\bar{v}_K^*)\in Z$ then $(\bar{p}_1,\dots,\bar{p}_M)$ solves (\ref{prob:1multi}) and $(\bar{v}_1^*,\dots,\bar{v}_K^*)$ solves (\ref{prob:2multi}). To find an element of set $Z$ defined by \eqref{set:KTmulti} we propose the Iterative Scheme \ref{alg:proximal_best2}.
	
	\begin{algorithm}[H]
		\caption{Proximal primal-dual best approximation iterative scheme for finite number of functions\label{alg:proximal_best2}}
		\begin{algorithmic}
			\STATE{Choose an initial point $x_0=(p_0,v_0^*) 
				\in \bigoplus_{i=1}^M H_i \times \bigoplus_{k=1}^K G_k$  and $\varepsilon>0$,\\ $p_0=(p_{1,0},\dots,p_{M,0})$, $v_0^*=(v_{1,0}^*,\dots,v_{K,0}^*)$} 
			\STATE{Choose sequences of parameters $\{\lambda_n\}_{n\ge 0}\in (0,1]$  and $\{\gamma_n\}_{n\ge 0},\{\mu_n\}_{n\ge 0}\in [\varepsilon,1/\varepsilon]$}
			\FOR{$n=0,1\dots$}
			\STATE{\bf{Fejerian step}}
			\FOR{$i=1,\dots,M$}
			\STATE{$a_{i,n} = \text{Prox}_{\gamma_n f_i} (p_{i,n} - \gamma_n\sum_{k=1}^{K} L_{ki}^* v_{k,n}^*)$}
			\STATE{$a_{i,n}^* = \gamma_n^{-1}(p_{i,n}-a_{i,n}) - \sum_{k=1}^{K} L_{ki}^* v_{k,n}^* $}
			\ENDFOR
			\FOR{$k=1,\dots,K$}
			\STATE{$b_{k,n} = \text{Prox}_{\mu_n g_k} (\sum_{i=1}^{M}L_{ik} p_{i,n} + \mu_n  v_{k,n}^*)$}
			\STATE{$b_{k,n}^* = \mu_n^{-1} (\sum_{i=1}^{M}  L_{ik} p_{i,n}-b_{k,n}) +  v_{k,n}^*  $}
			\STATE{$s_{M+k,n}^*=b_{k,n}-\sum_{i=1}^{M}L_{ik} a_{i,n} $}
			\ENDFOR
			\FOR{$i=1,\dots,M$}
			\STATE{ $s_{i,n}^*=a_{i,n}^*+ \sum_{k=1}^{K} L_{ki}^* b_{k,n}^* $}
			\ENDFOR
			\STATE{ $s_n^*= (s_{1,n}^*,\dots,s_{M,n}^*,s_{M+1,n}^*,\dots,s_{M+K,n}^*)$}
			\STATE{$\eta_n=\sum_{i=1}^M \langle a_{i,n} \ |\ a_{i,n}^* \rangle + \sum_{k=1}^{K} \langle b_{k,n} \ |\ b_{k,n}^* \rangle $}
			\STATE{$H_n = \left\{h \in E \mid \left\langle  h \mid s_n^*\right\rangle \leq \eta_n  \right\}$}
			\IF{$\|s_n^*\|=0$}
			\STATE{$\bar{p}=p_n$, $\bar{v}^*=v_n^*$}
			\STATE{Terminate}
			\ELSE
			\STATE{$x_{n+1/2}=x_{n}+\lambda_n(P_{\secondhsubset_n}(x_{n})-x_{n})$}
			\STATE{\bf{Haugazeau step}}
			\STATE{Choose $\hsubset_n$  closed convex such that $Z\subset C_n \subset H(x_n,x_{n+1/2})$
			} 
			\STATE{$x_{n+1}=P_{H(x_0, x_n) \cap \hsubset_n}(x_0)$}
			\ENDIF
			\ENDFOR
			\RETURN
		\end{algorithmic}
	\end{algorithm}

	Let us note that Proposition \ref{theorem:best_projection} can be easily generalized to cover also the case of the set $Z$ defined by \eqref{set:KTmulti}.

	\section{Experimental results}
	\label{sec:experiments}
	
	The goal of this section is to illustrate and analyze the performance of the proposed
	Iterative Scheme \ref{alg:proximal_best2}
	in solving problem \eqref{prob:1multi}, i.e. we aim at illustrating the main contribution of our work: (a) to show experimentally the influence of the choice of set $C_n$ on the convergence of the algorithm and (b) to show experimentally that the proposed attraction property provide an additional measure of the distance of the current iterate to the solution.
	We provide numerical results related to
	simple convex image inpaiting problem. 
	The considered problem can be rewritten as an instance of \eqref{prob:1multi} by setting $M=1$, $K=2$, $H = \mathbb{R}^{3D}$ and finding $\min_{p\in H} \quad  f_1(p) + \sum_{k=1}^{2} g_k\left( L_{k}p\right)$, where functions $f_1$, $g_1$ and $g_2$  correspond to positivity constraint, data fidelity term and total variation (TV) based regularization \cite{Rudin_1992_total_variation}, respectively. 
	We focus on the analysis of influence of the choice of $C_{n}$ on the convergence. To this end we report the number of iterations of the algorithm with different $C_{n}$ settings performed to reach a tolerance $\Vert p_{n+1}-p_{n}\Vert / \left( 1 + \Vert p_{n}\Vert\right) $ less than $\epsilon$ in two successive iterations. The considered algorithms are denoted hereafter by PDBA-C0, PDBA-C1, PDBA-C2, PDBA-C3, for  $C_{n}=H(x_{n},x_{n+1/2})$ and $C_{n}$ defined by \eqref{set:halfspace1}, \eqref{set:halfspace2} and \eqref{set:halfspace3}, respectively. In the case of  PDBA-C3 $\tau_n$ is set to 0.5.
	Numerically, the convergence rate improvement is measured by ItR defined as a ratio of
	the numbers of
	iterations consumed by PDBA-Ci (where i takes value 0,1,2,3) and those consumed by PDBA-C0.
	The  algorithms performance is illustrated by the following curves: (a) signal to noise ratio (SNR) and (b) the bounds given by Proposition \ref{prop:convergence_analysis}  as a functions of iteration number.

	The evaluation experiments concern the
	image inpainting problem which corresponds to the recovery of an image $\bar{p} \in  \mathbb{R}^{3D}$ 
	from lossy observations 
	$y =L_1\bar{p}$, where $L_1 \in \mathbb{R}^{3D\times 3D}$ is a diagonal matrix  such that for $i = 1, \ldots , D$ we have $L_1 (i,i) = L_1 (2i,2i)=L_1 (3i,3i)=0$, if the pixel $i$ in the observation image $y$ is lost and  $L_1 (i,i)= L_1 (2i,2i)=L_1 (3i,3i) = 1$, otherwise.  
	The considered optimization problem is of the form
	\begin{equation}
		\min_{p \in H} \quad \iota_{y}(L_1 p)   + \iota_{\mathcal{S}}(p) + TV( p)
		\label{prob:inpainting}
	\end{equation}
	where 
	$\iota$ is the indicator function defined as:
	\begin{equation}
		\iota_\mathcal{S}(p) = 
		\begin{cases}
			0 & \; \; \; \text{if}\; \; \;  p \in \mathcal{S}\\
			+\infty & \text{otherwise},\\ 
		\end{cases}
		\label{eq:iden_func}
	\end{equation}
	$TV : \mathbb{R}^{3D} \mapsto \mathbb{R}$ is a discrete isotropic total
	variation functional~\cite{Rudin_1992_total_variation}, i.e. for every $p \in
	\mathbb{R}^{3D}$, $TV(p)=g(L_2 p): =\omega \left(\sum_{d=1}^D([\Delta^{\mathrm{h}} p]_d)^2 + ([\Delta^{\mathrm{v}}  p]_d)^2 \right)^{1/2}$
	with $L_2 \in \mathbb{R}^{6D\times 3D}, L_2:= \left[
	(\Delta^{\mathrm{h}})^\top\;\;(\Delta^{\mathrm{v}})^\top
	\right]^\top$,  where $\Delta^{\mathrm{h}}\in \mathbb{R}^{3D\times 3D}$
	(resp.  $\Delta^{\mathrm{v}} \in \mathbb{R}^{3D\times 3D}$) corresponds to a
	horizontal (resp. vertical) gradient operator,\\
	\begin{align*}
		&[\Delta^{\mathrm{h}} p]_d:=[(\Delta^{\mathrm{h}} p)_{d},(\Delta^{\mathrm{h}} p)_{2d},(\Delta^{\mathrm{h}} p)_{3d}]\in \mathbb{R}^{3},\\
		&[\Delta^{\mathrm{v}} p]_d:=[(\Delta^{\mathrm{v}} p)_{d},(\Delta^{\mathrm{v}} p)_{2d},(\Delta^{\mathrm{v}} p)_{3d}] \in \mathbb{R}^{3}
	\end{align*}
	and $\omega$ denotes regularization parameter .
	The function $\iota_{\mathcal{S}}(p)$ is imposing the solution to belong to the set $\mathcal{S} = \left[ 0,1\right]^{3D} $.
	The dual problem to \eqref{prob:inpainting} is  the following
	optimization problem \cite[Example 3.24, 3.26, 3.27]{boyd2004convex}:
	\begin{align}
		\begin{aligned}	\label{inpainting:dual}
			\min_{v_1 \in G_1,\ v_2 \in G_2}\quad  & \langle y\ |\ v_1 \rangle 
			+ \sup_{r \in S} \langle r\ |\ - L_1^* v_1 - L_2^*v_2 \rangle + 
			TV^*(v_2)		  
		\end{aligned}
	\end{align}
	where $TV^*(v_2) = \omega \iota_{B}
	(\frac{v_2}{\omega} )$, convex set $B=\lbrace v \in \mathbb{R}^{6D}: \Vert v \Vert_2 \leq 1 \rbrace$, $G_1=\mathbb{R}^{3D}$, $G_2=\mathbb{R}^{6D}$.
	In the following experiments, we consider the cases of lossy observations with $\kappa$  randomly chosen pixels which are unknown.

	
	
	%

	In the following we examine the cases of $\kappa$ set to 20$\%$, 40$\%$, 60$\%$, 80$\%$, 90$\%$ (hereafter $\tilde{\kappa}$ denotes a fraction of missing pixels).
	For all  the algorithms we used the initialization $x_0 = \left[ y, L_1 y , L_2 y\right]^T $. The test were performed on image \texttt{fruits} from public domain (source: \textit{www.hlevkin.com/TestImages}) 
	of size $D =240 \times 256$. 
	
	In our first experiment, we study the influence of the choice of $C_{n}$ for different settings of $\gamma_n$, $\mu_n$ and $\lambda_n$, which play a significant role in convergence analysis. 
	The results summarized in Tables \ref{tab:res_inpainting_e001_gamma005},\ref{tab:res_inpainting_e001_gammma001}, \ref{tab:res_inpainting_e001_gamma1.5},  correspond to the 
	choice of $\gamma_n=\mu_n$ equal to 0.005, 0.01 and 1.5, respectively.
	These results 
	show that independently of the choice of parameters $\gamma_n$, $\mu_n$ algorithm PDBA-C1 leads to the best performance, while the results obtained with PDBA-C2 and PDBA-C3 are comparable to PDBA-C0. 
	Specifically, within our setting the numbers of
	iterations consumed by PDBA-C1 range from $40\%$ to $75\%$ of those consumed by PDBA-C0, while the SNR, values of TV and inpainting residues are negligible. By inspecting Tables \ref{tab:res_inpainting_e001_gamma005},\ref{tab:res_inpainting_e001_gammma001}, \ref{tab:res_inpainting_e001_gamma1.5}, one can observe that the obtained results depend strongly upon to the choice of $\gamma_n$ and $\mu_n$. 
	
	We would like to emphasize that ideally the termination tolerance should be a function of parameters $\gamma_n$ , $\mu_n$ and $\lambda_n$.
	The results presented in Table \ref{tab:res_inpainting_e001_summary} shows that in the case of $\gamma_n$ and $\mu_n$ equal to $0.003$ or $100$
	the tolerance should be smaller to prevent premature termination. In these cases the iteration number is very low, however the values of TV and SNR are significantly different than for the other choices.
	The  premature termination is due to flat slope of the convergence curve.
	Similar effect can be observed when $\lambda_n = 0.8$ (see Table \ref{tab:res_inpainting_e001_lambda}). 
	\newpage
	
	\begin{table} [H] \renewcommand{\arraystretch}{1.5}

		\begin{tabular}{c |c |c |c| c |}
			\multicolumn{1}{c}{\bf{$\kappa=$20$\%$}}& \multicolumn{1}{|c}{PDBA-C0} & \multicolumn{1}{c}{PDBA-C1} & \multicolumn{1}{c}{PDBA-C2}& \multicolumn{1}{c}{PDBA-C3} \\ \cline{1-5}
			ItR  &1 & \bf{0.40}& 1.02& 1.02\\ \cline{2-5}
			SNR&  24.19& 24.25& 24.18& 24.19\\ \cline{2-5}
			TV & 40.66 & 40.37& 40.68& 40.67\\ \cline{2-5}
			{$\frac{\Vert  y - L_1 p \Vert_{1}} {(1-\tilde{\kappa}) 3D}$} &
			0.004  &  0.004   & 0.004   & 0.004   \\ \cline{2-5}
			\multicolumn{1}{c}{}&\multicolumn{1}{c}{} &\multicolumn{1}{c}{} &\multicolumn{1}{c}{} &\multicolumn{1}{c}{}\\
			
			\multicolumn{1}{c}{\bf{ $\kappa=$ 40$\%$}}& \multicolumn{1}{|c}{PDBA-C0} & \multicolumn{1}{c}{PDBA-C1} & \multicolumn{1}{c}{PDBA-C2}& \multicolumn{1}{c}{PDBA-C3} \\ \cline{1-5}
			ItR &1 &\bf{0.51} &0.98& 1.02\\ \cline{2-5}
			SNR & 20.64&20.60 &20.64 & 20.64\\ \cline{2-5}
			TV &36.06 &36.04 & 36.05& 36.07\\ \cline{2-5}
			{$\frac{\Vert  y - L_1 p \Vert_{1}} {(1-\tilde{\kappa}) 3D}$} 
			&   0.002 &  0.003 &    0.002 &    0.002\\ \cline{2-5}
			
			\multicolumn{1}{c}{}&\multicolumn{1}{c}{} &\multicolumn{1}{c}{} &\multicolumn{1}{c}{} &\multicolumn{1}{c}{}\\
			\multicolumn{1}{c}{\bf{ $\kappa=$ 60$\%$}}& \multicolumn{1}{|c}{PDBA-C0} & \multicolumn{1}{c}{PDBA-C1} & \multicolumn{1}{c}{PDBA-C2}& \multicolumn{1}{c}{PDBA-C3} \\ \cline{1-5}
			ItR &1 &  \bf{0.44} & 1.01& 0.36\\ \cline{2-5}
			SNR &  18.26& 18.28 & 18.26 & 17.90\\ \cline{2-5}
			TV & 30.87& 30.55& 30.87& 30.80\\ \cline{2-5}
			{$\frac{\Vert  y - L_1 p \Vert_{1}} {(1-\tilde{\kappa}) 3D}$} 
			& 0.002 &    0.002 &   0.002 & 0.002\\ \cline{2-5}
			
			\multicolumn{1}{c}{}&\multicolumn{1}{c}{} &\multicolumn{1}{c}{} &\multicolumn{1}{c}{} &\multicolumn{1}{c}{}\\
			
			\multicolumn{1}{c}{\bf{ $\kappa=$ 80$\%$}}& \multicolumn{1}{|c}{PDBA-C0} & \multicolumn{1}{c}{PDBA-C1} & \multicolumn{1}{c}{PDBA-C2}& \multicolumn{1}{c}{PDBA-C3} \\ \cline{1-5}
			ItR &1 &\bf{0.49} & 0.89& 0.99\\ \cline{2-5}
			SNR & 16.17 & 16.18 &16.18 & 16.17\\ \cline{2-5}
			TV & 23.79& 23.50& 23.74& 23.79 \\ \cline{2-5}
			{$\frac{\Vert  y - L_1 p \Vert_{1}} {(1-\tilde{\kappa}) 3D}$} &
			0.001 &    0.001 &    0.001 &    0.001
			\\ \cline{2-5}
			\multicolumn{1}{c}{}&\multicolumn{1}{c}{} &\multicolumn{1}{c}{} &\multicolumn{1}{c}{} &\multicolumn{1}{c}{}\\
			
			\multicolumn{1}{c}{\bf{ $\kappa=$ 90$\%$}}& \multicolumn{1}{|c}{PDBA-C0} & \multicolumn{1}{c}{PDBA-C1} & \multicolumn{1}{c}{PDBA-C2}& \multicolumn{1}{c}{PDBA-C3} \\ \cline{1-5}
			ItR &1 &\bf{0.51} & 1.08& 0.99\\ \cline{2-5}
			SNR & 14.71& 14.62 & 14.70 & 14.71\\ \cline{2-5}
			TV &  18.87&   18.13&   18.94&   18.87\\ \cline{2-5}
			{$\frac{\Vert  y - L_1 p \Vert_{1}} {(1-\tilde{\kappa}) 3D}$} 
			&0.001 &   0.001 &    0.001 &    0.001\\ \cline{2-5}
		\end{tabular}
		\caption{Reconstruction results from incomplete data with  $\epsilon = 10^{-2}$,  $\lambda_n=1$, $\gamma_n = 0.005$, $\mu_n = 0.005$. \label{tab:res_inpainting_e001_gamma005}}
	\end{table}

	\begin{table} [H] \renewcommand{\arraystretch}{1.5}

		\begin{tabular}{c |c |c |c| c |}
			\multicolumn{1}{c}{\bf{$\kappa=$20$\%$}}& \multicolumn{1}{|c}{PDBA-C0} & \multicolumn{1}{c}{PDBA-C1} & \multicolumn{1}{c}{PDBA-C2}& \multicolumn{1}{c}{PDBA-C3} \\ \cline{1-5}
			ItR  &1 & \bf{0.44}& 1 & 1 \\ \cline{2-5}
			SNR&  24.02& 24.13& 24.02& 24.02\\ \cline{2-5}
			TV &   40.06 &  39.66 &   40.06 &   40.06 \\ \cline{2-5}
			{$\frac{\Vert  y - L_1 p \Vert_{1}} {(1-\tilde{\kappa}) 3D}$} 
			& 0.005 &  0.005 &   0.005 &   0.005\\ \cline{2-5}
			\multicolumn{1}{c}{}&\multicolumn{1}{c}{} &\multicolumn{1}{c}{} &\multicolumn{1}{c}{} &\multicolumn{1}{c}{}\\
			
			\multicolumn{1}{c}{\bf{ $\kappa=$ 40$\%$}}& \multicolumn{1}{|c}{PDBA-C0} & \multicolumn{1}{c}{PDBA-C1} & \multicolumn{1}{c}{PDBA-C2}& \multicolumn{1}{c}{PDBA-C3} \\ \cline{1-5}
			ItR &1 &\bf{0.54} & 1.02& 1.03\\ \cline{2-5}
			SNR & 20.52&20.56 &20.51 & 20.53\\ \cline{2-5}
			TV &  35.62 & 35.34 &   35.67 &   35.61\\ \cline{2-5}
			{$\frac{\Vert  y - L_1 p \Vert_{1}} {(1-\tilde{\kappa}) 3D}$} 
			&  0.004 &    0.004 &    0.004 &   0.004\\ \cline{2-5}

			\multicolumn{1}{c}{}&\multicolumn{1}{c}{} &\multicolumn{1}{c}{} &\multicolumn{1}{c}{} &\multicolumn{1}{c}{}\\
			\multicolumn{1}{c}{\bf{ $\kappa=$ 60$\%$}}& \multicolumn{1}{|c}{PDBA-C0} & \multicolumn{1}{c}{PDBA-C1} & \multicolumn{1}{c}{PDBA-C2}& \multicolumn{1}{c}{PDBA-C3} \\ \cline{1-5}
			ItR &1 &  \bf{0.52} & 1.04& 0.91\\ \cline{2-5}
			SNR &  18.24& 18.25 & 18.22 & 18.23\\ \cline{2-5}
			TV &  30.19 &   29.97 &   30.37 &   30.17\\ \cline{2-5}
			{$\frac{\Vert  y - L_1 p \Vert_{1}} {(1-\tilde{\kappa}) 3D}$} 
			&  0.003 &    0.003  &  0.003 &    0.003\\ \cline{2-5}
			
			\multicolumn{1}{c}{}&\multicolumn{1}{c}{} &\multicolumn{1}{c}{} &\multicolumn{1}{c}{} &\multicolumn{1}{c}{}\\
			
			\multicolumn{1}{c}{\bf{ $\kappa=$ 80$\%$}}& \multicolumn{1}{|c}{PDBA-C0} & \multicolumn{1}{c}{PDBA-C1} & \multicolumn{1}{c}{PDBA-C2}& \multicolumn{1}{c}{PDBA-C3} \\ \cline{1-5}
			ItR &1 &\bf{0.59} & 1.21& 1.02\\ \cline{2-5}
			SNR & 16.15 & 16.16 &16.15 & 16.15\\ \cline{2-5}
			TV &  23.29 &  22.92 &   23.48 &   23.3202
			\\ \cline{2-5}
			{$\frac{\Vert  y - L_1 p \Vert_{1}} {(1-\tilde{\kappa}) 3D}$} 
			&  0.002 &   0.002 &   0.002 &   0.002\\ \cline{2-5}
			\multicolumn{1}{c}{}&\multicolumn{1}{c}{} &\multicolumn{1}{c}{} &\multicolumn{1}{c}{} &\multicolumn{1}{c}{}\\
			
			\multicolumn{1}{c}{\bf{ $\kappa=$ 90$\%$}}& \multicolumn{1}{|c}{PDBA-C0} & \multicolumn{1}{c}{PDBA-C1} & \multicolumn{1}{c}{PDBA-C2}& \multicolumn{1}{c}{PDBA-C3} \\ \cline{1-5}
			ItR &1 &\bf{0.71} & 1.52& 0.59\\ \cline{2-5}
			SNR & 14.67& 14.65 & 14.65 & 14.33\\ \cline{2-5}
			TV &  18.11 &   17.87 &   18.62 &   16.01\\ \cline{2-5}
			{$\frac{\Vert  y - L_1 p \Vert_{1}} {(1-\tilde{\kappa}) 3 D}$} 
			&0.001 &   0.001 &   0.001 &   0.002\\ \cline{2-5}
		\end{tabular}
		\caption{Reconstruction results from incomplete data with  $\epsilon = 10^{-2}$,   $\lambda_n=1$, $\gamma_n = 0.01$, $\mu_n = 0.01$. \label{tab:res_inpainting_e001_gammma001}}
	\end{table}

	\begin{table} [H] \renewcommand{\arraystretch}{1.5}

		\begin{tabular}{c |c |c |c| c |}
			\multicolumn{1}{c}{\bf{$\kappa=$20$\%$}}& \multicolumn{1}{|c}{PDBA-C0} & \multicolumn{1}{c}{PDBA-C1} & \multicolumn{1}{c}{PDBA-C2}& \multicolumn{1}{c}{PDBA-C3} \\ \cline{1-5}
			ItR  &1 & \bf{0.75}& 1 & 1 \\ \cline{2-5}
			SNR&  23.00& 23.03& 23.00& 23.00\\ \cline{2-5}
			TV &  34.91 & 34.99 &  34.91 &   34.91\\ \cline{2-5}
			{$\frac{\Vert  y - L_1 p \Vert_{1}} {(1-\tilde{\kappa}) 3D}$} &
			0.011  &  0.011 &  0.011 &  0.011\\ \cline{2-5}
			\multicolumn{1}{c}{}&\multicolumn{1}{c}{} &\multicolumn{1}{c}{} &\multicolumn{1}{c}{} &\multicolumn{1}{c}{}\\
			
			\multicolumn{1}{c}{\bf{ $\kappa=$ 40$\%$}}& \multicolumn{1}{|c}{PDBA-C0} & \multicolumn{1}{c}{PDBA-C1} & \multicolumn{1}{c}{PDBA-C2}& \multicolumn{1}{c}{PDBA-C3} \\ \cline{1-5}
			ItR &1 &\bf{0.70} & 1.02& 1.03\\ \cline{2-5}
			SNR & 20.01&20.00 &20.01 & 20.01\\ \cline{2-5}
			TV &  31.19 &  31.17 &   31.19 &   31.19\\ \cline{2-5}
			{$\frac{\Vert  y - L_1 p \Vert_{1}} {(1-\tilde{\kappa}) 3D}$} &
			0.008 &   0.009 &   0.008 &    0.008\\ \cline{2-5}
			\multicolumn{1}{c}{}&\multicolumn{1}{c}{} &\multicolumn{1}{c}{} &\multicolumn{1}{c}{} &\multicolumn{1}{c}{}\\
			\multicolumn{1}{c}{\bf{ $\kappa=$ 60$\%$}}& \multicolumn{1}{|c}{PDBA-C0} & \multicolumn{1}{c}{PDBA-C1} & \multicolumn{1}{c}{PDBA-C2}& \multicolumn{1}{c}{PDBA-C3} \\ \cline{1-5}
			ItR &1 &  \bf{0.69} & 0.99& 1.00\\ \cline{2-5}
			SNR &  17.90& 17.89 & 17.89 & 17.89\\ \cline{2-5}
			TV & 26.60 &   26.63 &   26.59 &  26.59\\ \cline{2-5}
			{$\frac{\Vert  y - L_1 p \Vert_{1}} {(1-\tilde{\kappa}) 3D}$} &
			0.006 &   0.006 &    0.006 &    0.006\\ \cline{2-5}
			\multicolumn{1}{c}{}&\multicolumn{1}{c}{} &\multicolumn{1}{c}{} &\multicolumn{1}{c}{} &\multicolumn{1}{c}{}\\
			
			\multicolumn{1}{c}{\bf{ $\kappa=$ 80$\%$}}& \multicolumn{1}{|c}{PDBA-C0} & \multicolumn{1}{c}{PDBA-C1} & \multicolumn{1}{c}{PDBA-C2}& \multicolumn{1}{c}{PDBA-C3} \\ \cline{1-5}
			ItR &1 &\bf{0.73} & 1 & 1.02\\ \cline{2-5}
			SNR & 15.87 & 15.87 & 15.87 & 15.87\\ \cline{2-5}
			TV & 20.46 &   20.52 &   20.48 &   20.49 \\ \cline{2-5}
			{$\frac{\Vert  y - L_1 p \Vert_{1}} {(1-\tilde{\kappa}) 3D}$} &
			0.004 &   0.004&   0.004&   0.004\\ \cline{2-5}
			\multicolumn{1}{c}{}&\multicolumn{1}{c}{} &\multicolumn{1}{c}{} &\multicolumn{1}{c}{} &\multicolumn{1}{c}{}\\
			
			\multicolumn{1}{c}{\bf{ $\kappa=$ 90$\%$}}& \multicolumn{1}{|c}{PDBA-C0} & \multicolumn{1}{c}{PDBA-C1} & \multicolumn{1}{c}{PDBA-C2}& \multicolumn{1}{c}{PDBA-C3} \\ \cline{1-5}
			ItR &1 &\bf{0.74} & 0.95& 1.02\\ \cline{2-5}
			SNR & 14.33& 14.35 & 14.31 & 14.33\\ \cline{2-5}
			TV & 15.99 &   16.24 &  15.79 &   16.01\\ \cline{2-5}
			{$\frac{\Vert  y - L_1 p \Vert_{1}} {(1-\tilde{\kappa}) 3D}$} &
			0.002 &    0.002&    0.002&    0.002\\ \cline{2-5}
		\end{tabular}
		\caption{Reconstruction results from incomplete data with  $\epsilon = 10^{-2}$,  $\lambda_n=1$,  $\gamma_n = 1.5$, $\mu_n = 1.5$. \label{tab:res_inpainting_e001_gamma1.5}}
	\end{table}

	\begin{table} [H] \renewcommand{\arraystretch}{1.5} 
		\begin{center}
			\begin{tabular}{p{0.75cm} c|c |c |c |c| c |}
				\multirow{5}{*}{\rotatebox[origin=c]{90}{\bf
						{PDBA-C0
						}}}
						&\multicolumn{1}{c|}{$\gamma_n$, $\mu_n$}&\multicolumn{1}{c}{0.003} &  \multicolumn{1}{c}{0.005} &  \multicolumn{1}{c}{0.01} & \multicolumn{1}{c}{1.5} & \multicolumn{1}{c}{100}\\ \cline{2-7}
						&\multicolumn{1}{c|}{SNR}& 7.28& 24.19 & 24.02 & 23.00 & 7.64 \\ \cline{2-7}
						&\multicolumn{1}{c|}{TV}& 130.29 &40.66 & 40.06 & 34.91& 120.99\\ \cline{2-7}
						&\multicolumn{1}{c|}{$\frac{\Vert  y - L_1 p \Vert_{1}} {(1-\tilde{\kappa}) 3D}$}&   0.003& 0.004& 0.005&  0.011 &  0.008\\ \cline{2-7}
						&\multicolumn{1}{c|}{It No.}& 2&7543& 6382 & 2883 & 5032 \\ \cline{2-7}
						\multicolumn{1}{c}{}&\multicolumn{1}{c}{}&\multicolumn{1}{c}{}&\multicolumn{1}{c}{}&\multicolumn{1}{c}{}&\multicolumn{1}{c}{}&\multicolumn{1}{c}{} \\
						\multirow{5}{*}{\rotatebox[origin=c]{90}{\bf
								{PDBA-C1
								}}}&\multicolumn{1}{c|}{$\gamma_n$, $\mu_n$}&\multicolumn{1}{c}{0.003} &  \multicolumn{1}{c}{0.005} &  \multicolumn{1}{c}{0.01} & \multicolumn{1}{c}{1.5} & \multicolumn{1}{c}{100}\\ \cline{2-7}
								&\multicolumn{1}{c|}{SNR}& 7.28& 24.25 & 24.13 & 23.03 & 7.55 \\ \cline{2-7}
								&\multicolumn{1}{c|}{TV}& 130.29 & 40.37& 39.66 &34.99 & 122.89\\ \cline{2-7}
								&\multicolumn{1}{c|}{$\frac{\Vert  y - L_1 p \Vert_{1}} {(1-\tilde{\kappa}) 3D}$}& 0.003 &0.004 &  0.005&  0.011& 0.007\\ \cline{2-7}
								&\multicolumn{1}{c|}{ItR}& 1&  0.40 & 0.44 & 0.75 & 0.83 \\ \cline{2-7}
							\end{tabular}
							\caption{Reconstruction results from incomplete data with  $\epsilon = 10^{-2}$, $\kappa=20\%$. \label{tab:res_inpainting_e001_summary}}
						\end{center}
					\end{table}

					In the second experiment, we compare 
					PDBA-C0 and PDBA-C1 (best over  Algorithms with memory according to the first experiment). We present reconstruction results (see Tab.~\ref{tab:res_inpainting_e001_summary}) as well as supplying convergence curves (see Fig~\ref{fig:inpainting}), i.e.  SNR and bound as a function of iterations. Hereafter we call bounds as
					$-\|(p_0,v_0^*)-(p_n,v_{n}^*)\|^2=-\|x_0-x_n\|^2$ (see \eqref{prop:inertial_property:item2} of Proposition \ref{prop:inertial_property}).
					One can observe that PDBA-C1 leads to a faster convergence and the bounds are more tight (in the sense of Proposition \ref{prop:inertial_property}  \eqref{prop:inertial_property:item2}).
					The difference is the most important in the early stage of the iterations. Both algorithms slow down afterwards. For $\gamma_n=0.01$ (resp. $\mu_n = 0.01$) both versions of the algorithm lead to some numerical oscillations in convergence, which are no more visible for 
					settings  $\gamma_n=0.003$ (resp. $\mu_n = 0.003$).
					\begin{table}[H] \renewcommand{\arraystretch}{1.5} 
						\begin{center}
							\begin{tabular}{|c|c |c |c |c| c |c |c |c }
								\multicolumn{1}{c}{}&\multicolumn{4}{c}{PDBA-C0} &  \multicolumn{4}{c}{PDBA-C1} \\ \cline{2-8}
								\multicolumn{1}{c|}{$\lambda_n$}& SNR0 & TV & $\frac{\Vert  y - L_1 p \Vert_{1}} {(1-\tilde{\kappa}) 3D}$ & It0 & SNR1  
								& TV & $\frac{\Vert  y - L_1 p \Vert_{1}} {(1-\tilde{\kappa}) 3D}$ & $\frac{It1}{It0}$\\ \cline{1-9}
								1 &  24.19& 40.66 & 0.004 & 7543 &   24.25 & 40.37& 0.004& \bf{0.40}\\ \hline
								0.95 & 24.32 & 40.36 & 0.003 & 5149 &  24.25  & 40.33& 0.004 &  \bf{0.56}\\ \hline
								0.9 & 24.24& 40.37 & 0.004 & 4584 & 24.26   & 40.26& 0.004&  \bf{0.57}\\ \hline
								0.8 & 7.30& 129.24 &  0.005 &  2 &   7.30 & 129.24&  0.005&  \bf{1}\\ \hline
							\end{tabular}
							\caption{Reconstruction results from incomplete coefficients with  $\epsilon = 10^{-2}$, $\kappa=20\%$, $\gamma_n = 0.005$, $\mu_n = 0.005$. \label{tab:res_inpainting_e001_lambda}}
						\end{center}
					\end{table}
					
		\begin{figure}[H]
			\centering
					\centering
					\subfigure[Original]{
						\centering
						\includegraphics[width=0.3\textwidth]{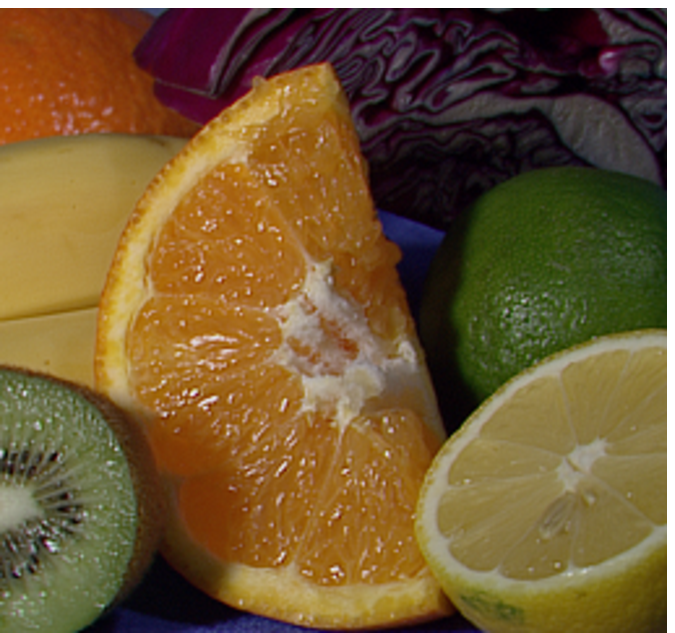}
					}%
					~ 
					\subfigure[Degraded]{
						\centering
						\includegraphics[width=0.3\textwidth]{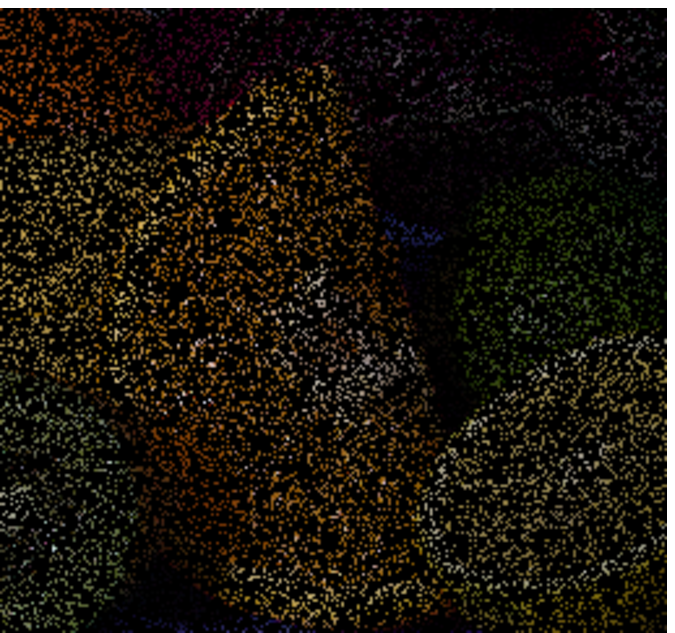}
					}
					~ 
					\subfigure[Ours result]{
						\centering
						\includegraphics[width=0.3\textwidth]{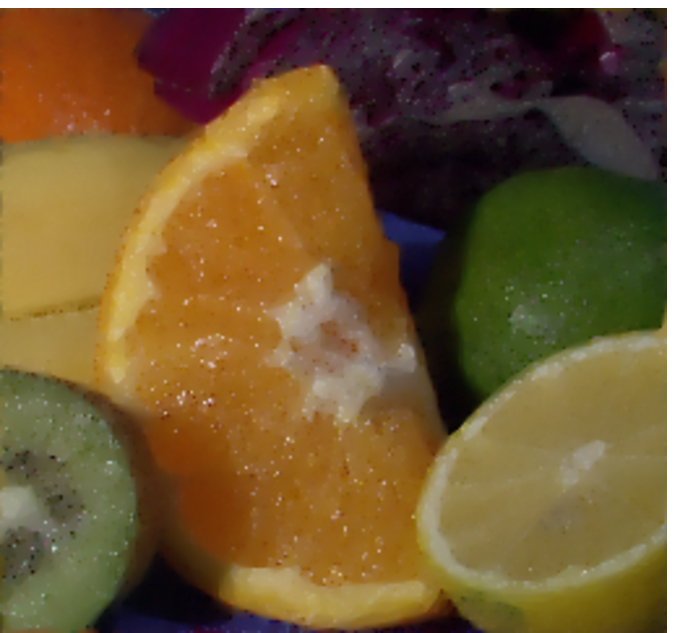}
					}
					~
					\subfigure[SNR ($\gamma_n = 0.01$, $\mu_n = 0.01$)]{
						\centering
						\includegraphics[width=0.4\textwidth]{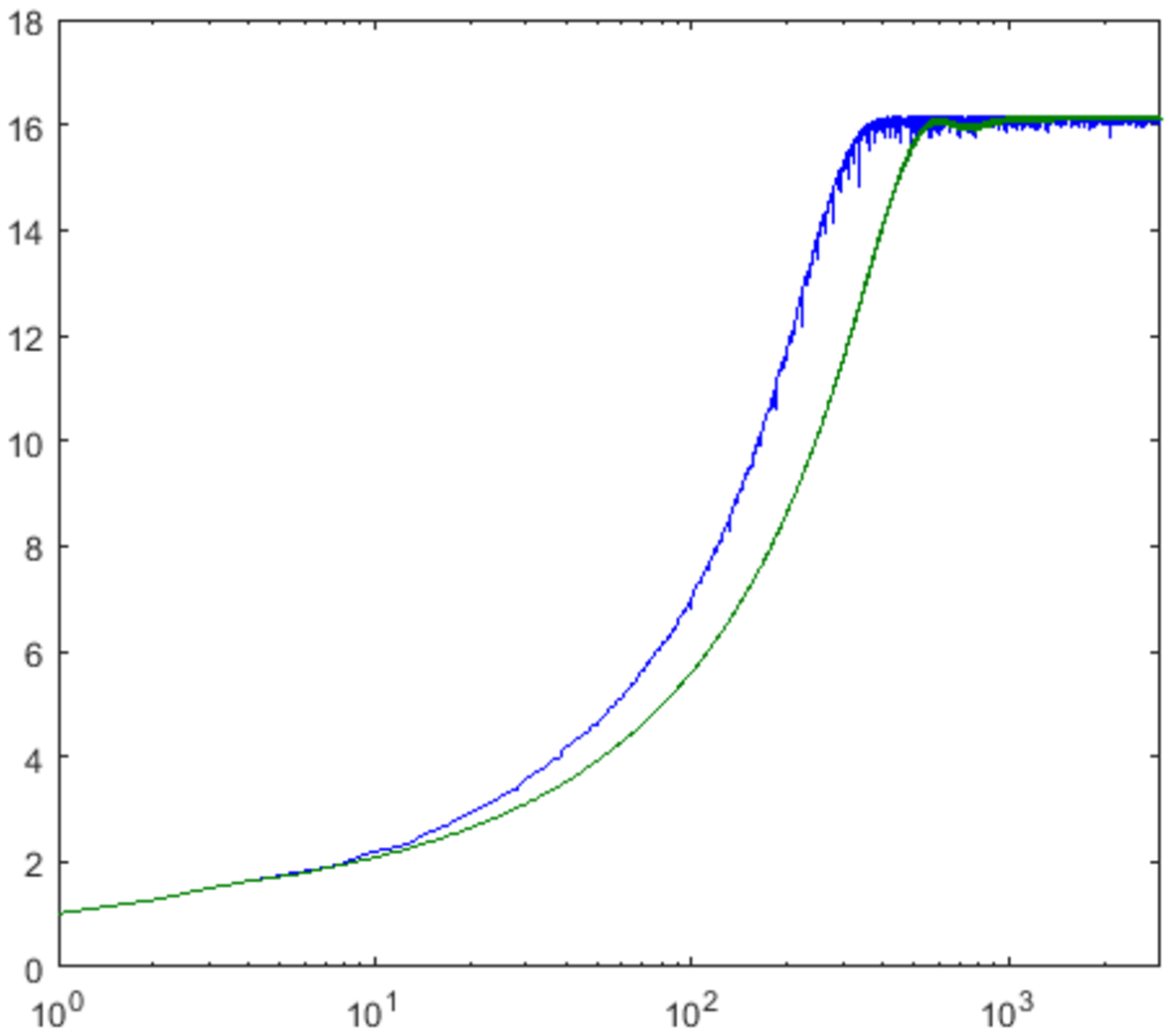}
					}%
					~ 
					\subfigure[Bounds ($\gamma_n = 0.01$, $\mu_n = 0.01$)]{
						\centering
						\includegraphics[width=0.4\textwidth]{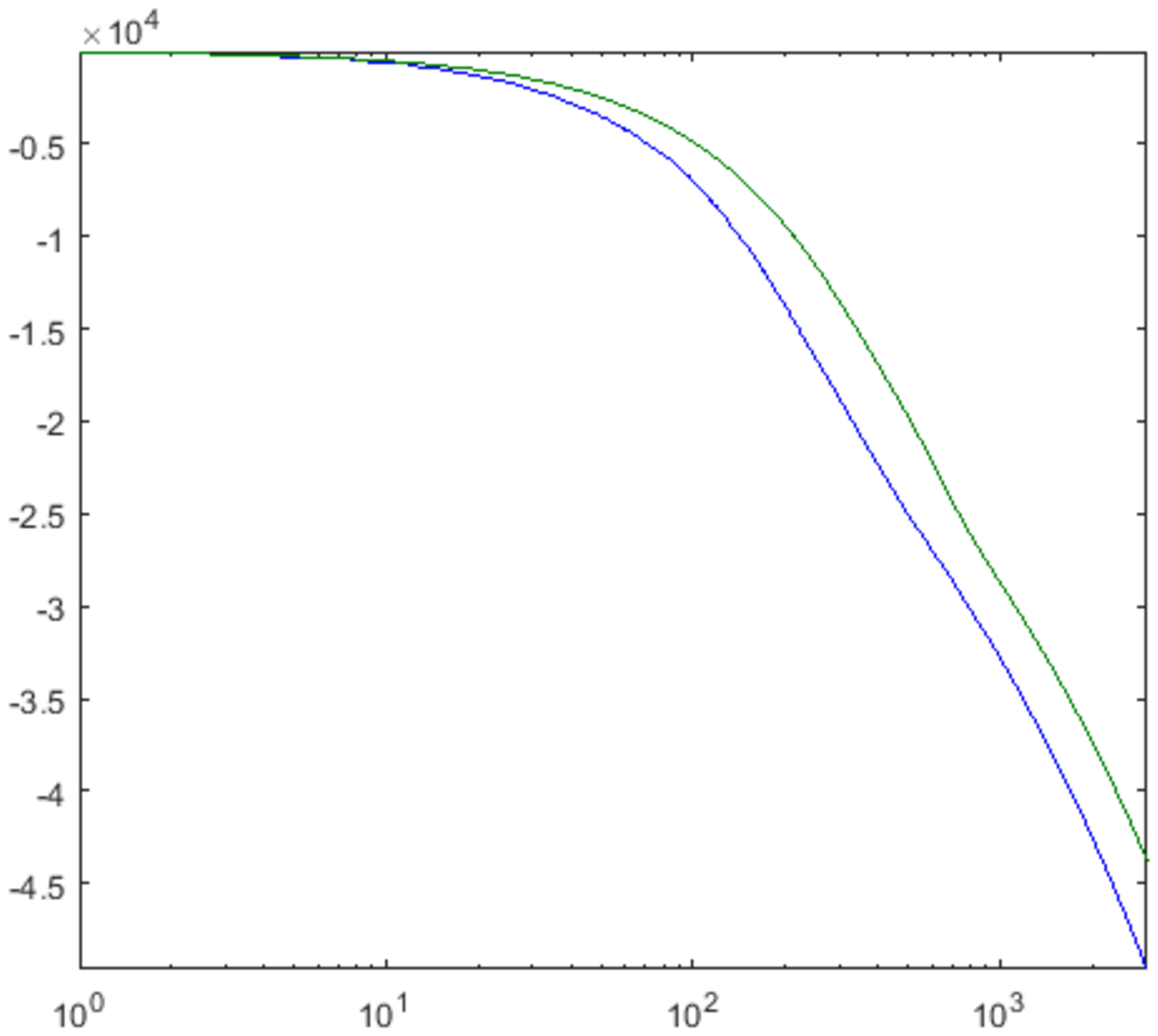}
						
					}
					\\ 
					\subfigure[SNR ($\gamma_n = 0.003$, $\mu_n = 0.003$)]{
						\centering
						\includegraphics[width=0.4\textwidth]{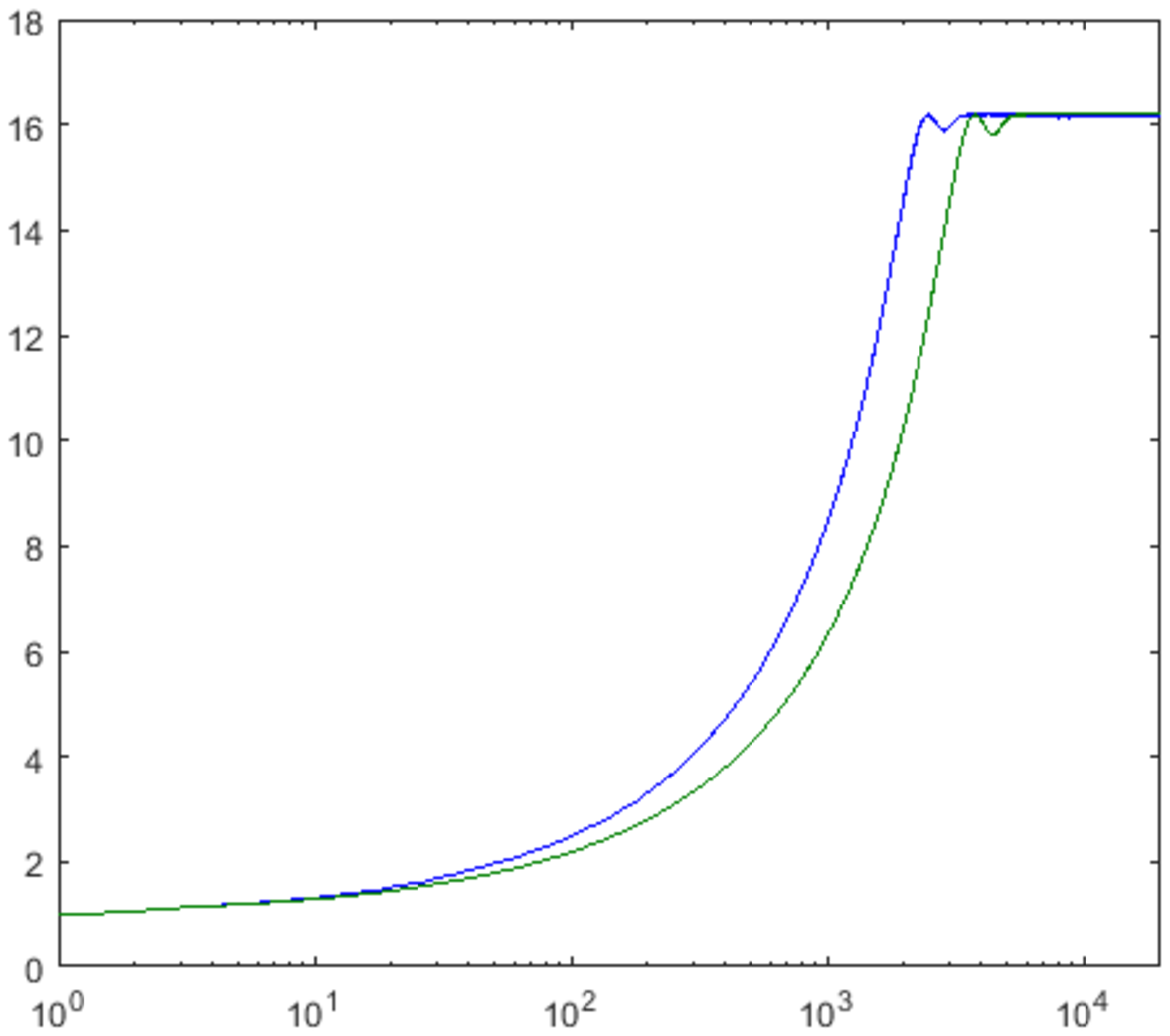}
					}
					~
					\subfigure[Bounds ($\gamma_n = 0.003$, $\mu_n = 0.003$)]{
						\centering
						\includegraphics[width=0.4\textwidth]{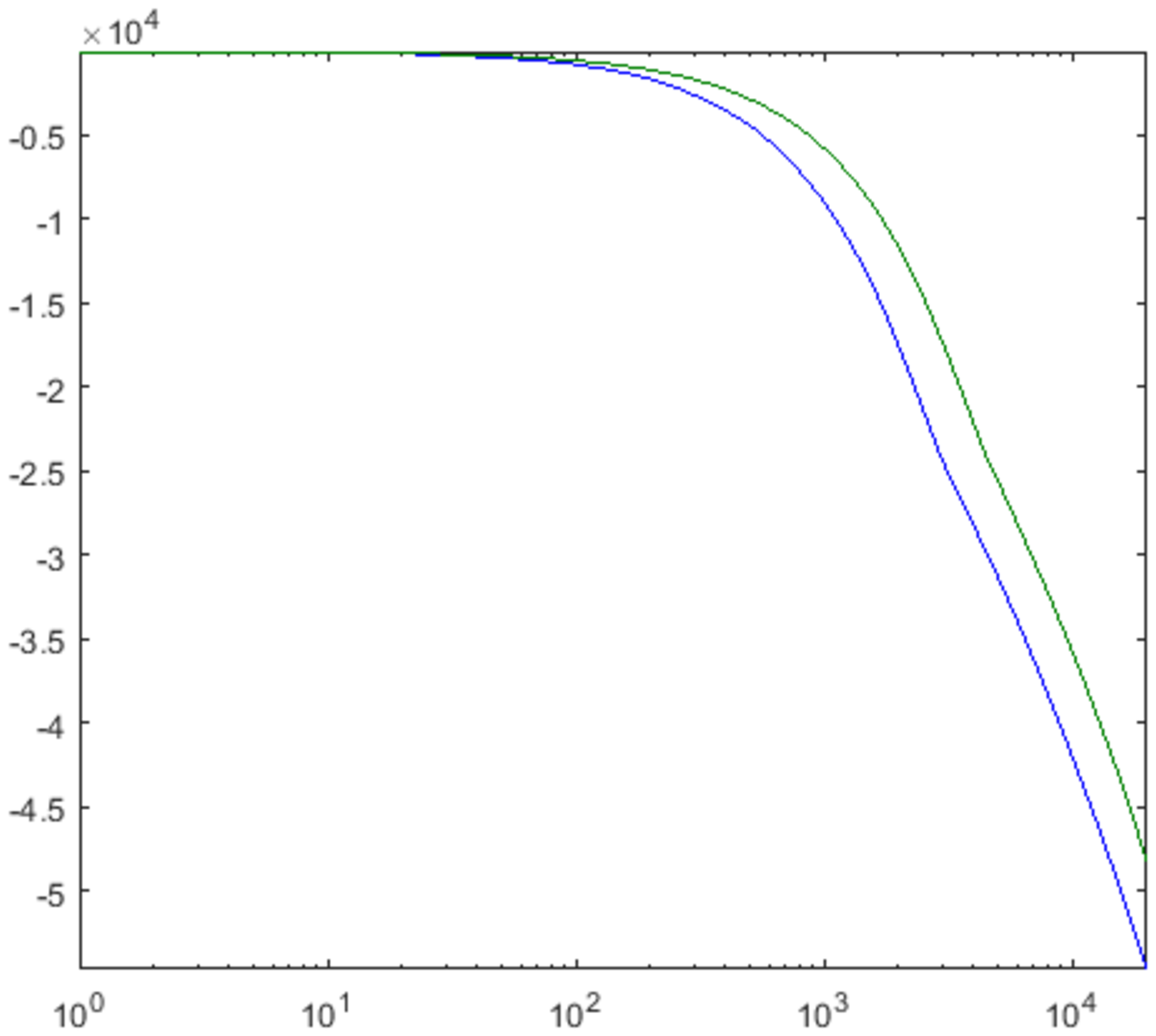}
					}%
										
			\caption{Figure (a) shows the $240 \times 256$ clean fruits image, (b) shows the same image for
				which 80$\%$ randomly chosen missing pixels, and (c) shows the solution generated
				by Algorithm \ref{alg:proximal_best2} PDBA-C1 after 3000 iterations, (d-e) and (f-g) show 
				SNR and \textit{attraction property} values (i.e $-\|(p_0,v_0^*)-(p_n,v_{n}^*)\|$) versus iterations, respectively. Algorithm  PDBA-C0 and PDBA-C1 are denoted in green and blue, respectively. \label{fig:inpainting}}
		\end{figure}

					\section{Conclusions}
					\label{sec:conclusions}
					In this paper we concentrate on a design of the novel scheme by incorporating  memory into projection algorithm.
					We propose a new way of introducing memory effect into projection algorithms through incorporating into the algorithm projections onto  polyhedral sets built as intersections of halfspaces constructed with the help of current and previous iterates. To this end we provide the closed-form expressions and the algorithm  for  finding projections onto intersections of three halfspaces. Moreover, we adapt the general scheme proposed  in \cite{explicit_formulas_KR} to  particular halfspaces which may arise in the course of our Iterative Scheme. This allows us to limit the number of steps for finding projections. Building upon these results, we propose a new  primal-dual splitting algorithm with memory for solving convex optimization problems  via  general
					class of monotone inclusion problems involving parallel sum of maximally monotone
					operators composed with linear operators. 
					To analyse  convergence we prove the \textit{attraction property}. The \textit{attraction property} provides us with an evaluation criterion allowing to compare projection algorithms with and without memory. 
					Our experimental
					results related to preliminary implementation of the algorithms have shown that the proposed algorithm with memory
					generally 
					needs smaller number of iterations 
					than the corresponding original one \cite{Alotaibi_2015_best}. Although only three strategies of introducing memory effect are analysed in this work, the generality of the presented theoretical results allow us to address versatility of the approach by constructing various forms of the algorithm which use information from former steps.

					\bibliographystyle{plain}
					\bibliography{references}


\end{document}